\newtheorem{theorem}{Theorem}[section]
\newtheorem{corollary}[theorem]{Corollary}
\newtheorem{lemma}[theorem]{Lemma}
\theoremstyle{definition}
\newtheorem{definition}[theorem]{Definition}
\newdimen\pIR
\newcommand\StevesR{{\rm I\kern\pIR R}}
\def\prob#1#2{\mbox{Pr}_{#1}\left[ #2 \right]}
\def\expec#1#2{{\mathbb{E}}_{#1}\left[ #2 \right]}
\def\defeq{\stackrel{\mathrm{def}}{=}}
\def\setof#1{\left\{#1  \right\}}
\def\sizeof#1{\left|#1  \right|}
\newcommand{\sym}[1]{\mathrm{sym} (#1)}
\newcommand{\R}{\mathbb{R}}
\newcommand{\mydet}[1]{\det\left(#1\right)}
\begin{document}

\title{
Interlacing Families I: Bipartite Ramanujan Graphs of All Degrees
\thanks{
A preliminary version of this paper appeared in the Proceedings of the
54th IEEE Annual Symposium on Foundations of Computer Science.
}}

\author{
Adam W. Marcus\\
Yale University\\
\and
Daniel A. Spielman \\ 
Yale University
\and 
Nikhil Srivastava\\
Microsoft Research, 
India
}

\maketitle

\begin{abstract}
We prove that there exist infinite families of regular bipartite Ramanujan graphs
  of every degree bigger than $2$.
We do this by proving a variant of a conjecture of Bilu and Linial
  about the existence of good $2$-lifts of every graph.

We also establish the existence of infinite families of `irregular Ramanujan' graphs, whose
  eigenvalues are bounded by the spectral radius of their universal cover. Such
  families were conjectured to exist by Linial and others.
In particular, we prove the existence of infinite families of $(c,d)$-biregular bipartite graphs
  with all non-trivial eigenvalues bounded by $\sqrt{c-1}+\sqrt{d-1}$, for all $c, d \geq 3$.

Our proof exploits a new technique for demonstrating the existence
  of useful combinatorial objects that we call the ``method of interlacing polynomials''.
\end{abstract}

\section{Introduction}\label{sec:intro}

Ramanujan graphs have been the focus of substantial study in Theoretical Computer Science
  and Mathematics.
They are graphs whose non-trivial adjacency matrix eigenvalues are as small as possible.
Previous constructions of Ramanujan graphs have been sporadic, only producing Ramanujan
  graphs of particular degrees.
In this paper, we prove a variant of a conjecture of Bilu and Linial~\cite{BiluLinial},
  and use it to realize an approach they suggested for constructing bipartite
  Ramanujan graphs of every degree.

Our main technical contribution is a novel existence argument.
The conjecture of Bilu and Linial requires us to prove that every 
   graph has a signed adjacency matrix with all of its eigenvalues in a small range.
We do this by proving that the roots of the {\em expected} characteristic polynomial
  of a randomly signed adjacency matrix lie in this range.
In general, a statement like this is useless, as the roots of a sum of polynomials do not necessarily
  have anything to do with the roots of the polynomials in the sum.
However, there seem to be many sums of combinatorial polynomials for which this intuition is wrong.
With this in mind, we identify certain special collections of polynomials which we call ``interlacing families'', and 
  prove that such families always contain a polynomial whose largest root is at most
  the largest root of the sum.
We show that the polynomials arising from signings of a graph form such a family.
To finish the proof, we then bound the largest root of the sum of the characteristic 
  polynomials of the signed adjacency matrices of a graph by observing that this sum 
  is the well-studied matching polynomial of the graph.

This paper is the first one in a series which develops
  the method of interlacing polynomials.
In the next paper \cite{IF2}, we use the method 
  to give a positive resolution to the Kadison--Singer problem. 

%Unlike previous approaches, our techniques do not rely on regularity and 
%  also establish the existence of `irregular' bipartite Ramanujan graphs, which
%  will be defined precisely in the next section.

\section{Technical Introduction and Preliminaries}
\subsection{Ramanujan Graphs}
Ramanujan graphs are defined in terms of the eigenvalues of their adjacency matrices.
If $G$ is a $d$-regular graph and $A$ is its adjacency matrix, then $d$ is always
  an eigenvalue of $A$.
The matrix $A$ has an eigenvalue of $-d$ if and only if $G$ is bipartite.
The eigenvalues of $d$, and $-d$ when $G$ is bipartite, are called the \textit{trivial} 
  eigenvalues of $A$.
Following Lubotzky, Phillips and Sarnak~\cite{LPS}, we say that a $d$-regular
graph is \textit{Ramanujan} if all of its non-trivial eigenvalues
  lie between $-2\sqrt{d-1}$ and $2 \sqrt{d-1}$.
It is easy to construct Ramanujan graphs with a small number of vertices:
  $d$-regular complete graphs and complete bipartite graphs are Ramanujan.
The challenge is to construct an infinite family of $d$-regular graphs that
  are all Ramanujan.
One cannot construct infinite families of $d$-regular graphs whose eigenvalues
   lie in a smaller range:
  the Alon--Boppana bound (see \cite{nilli}) tells us that
   for every constant $\epsilon > 0$,
   every sufficiently large $d$-regular graph has a 
   non-trivial eigenvalue with absolute value at least
   $2\sqrt{d-1}-\epsilon$.

%The Alon--Boppana bound implies that such a family would be asymptotically optimal in the sense that no infinite families of $d$-regular graphs can have all trivial eigenvalues in a smaller range:
%\begin{lemma}[Alon--Boppana (see \cite{nilli})] \label{AlonBoppana}
%For all positive integers $d$ and all $\epsilon > 0$ there exists an $n = n(\epsilon, d)$ such that every $d$-regular graph with at least $n$ vertices contains a non-trivial eigenvalue $\lambda$ such that $\lambda > 2 \sqrt{d-1} - \epsilon$.
%\end{lemma}

Lubotzky, Phillips and Sarnak~\cite{LPS} and Margulis~\cite{Margulis}
  were the first to construct 
  infinite families of
  Ramanujan graphs of constant degree.
They built both bipartite and non-bipartite Ramanujan graphs from Cayley graphs.
All of their graphs are regular and have degrees $p+1$ where $p$ is a prime.
There have been very few other constructions of Ramanujan graphs
  \cite{pizer,chiu1992cubic,jordan1997ramanujan,Morgenstern}.
To the best of our knowledge, the only degrees for which infinite families of Ramanujan
  graphs were previously known to exist were those of the form $q+1$ where $q$ is a 
  prime power.
Lubotzky~\cite[Problem 10.7.3]{LubotzkyBook} asked whether there exist infinite
  families of Ramanujan graphs of every degree greater than 2.
We resolve this conjecture in the affirmative in the bipartite case.

\subsection{2-Lifts}\label{intro:lifts}

Bilu and Linial~\cite{BiluLinial} suggested constructing Ramanujan graphs through
  a sequence of 2-lifts of a base graph.
Given a graph $G = (V,E)$, a 2-lift of $G$ is a graph that has two vertices
  for each vertex in $V$.
This pair of vertices is called the \textit{fibre} of the original vertex.
Every edge in $E$ corresponds to two edges in the 2-lift.
If $(u,v)$ is an edge in $E$, $\setof{u_{0}, u_{1}}$ is the fibre of $u$,
  and $\setof{v_{0}, v_{1}}$ is the fibre of $v$, then the 2-lift can
  either contain the pair of edges
\begin{align}
&  \setof{(u_{0}, v_{0}), (u_{1}, v_{1})}, \text{or} \label{posedge}\\
&  \setof{(u_{0}, v_{1}), (u_{1}, v_{0})}. \label{negedge}
\end{align}
If only edge pairs of the first type appear, then the 2-lift is just
  two disjoint copies of the original graph.
If only edge pairs of the second type appear, then we obtain the {\em double-cover} of $G$.

To analyze the eigenvalues of a 2-lift, Bilu and Linial study 
  \textit{signings} $s : E \rightarrow \setof{\pm 1}$ of the edges of $G$.
They place signings in one-to-one correspondence with 2-lifts
   by setting $s (u,v) = 1$ if edges of type \eqref{posedge}
  appear in the 2-lift, and $s (u,v) = -1$ if edges of type \eqref{negedge} appear.
They then define the signed adjacency matrix $A_s$ to be the same as the adjacency matrix of $G$,
  except that the entries corresponding to an edge
  $(u,v)$ are $s (u,v)$.
They prove~\cite[Lemma~3.1]{BiluLinial}
  that the eigenvalues of the 2-lift are the union, taken with multiplicity,
  of the eigenvalues of the adjacency matrix $A$ and those of the signed
  adjacency matrix $A_s$.
Following Friedman~\cite{friedman2003relative}, they refer to the eigenvalues of $A$
  as the \textit{old eigenvalues} and the eigenvalues of $A_s$
  as the \textit{new eigenvalues}.
The main result of their paper is that every graph of maximal degree $d$ has a signing in which
  all of the new eigenvalues have absolute value at most
  $O (\sqrt{d \log^{3} d} )$.
They then build arbitrarily large $d$-regular 
  expander graphs by repeatedly taking 2-lifts of
  a complete graph on $d+1$
  vertices.

Bilu and Linial conjectured that every $d$-regular graph
  has a signing
 in which
  all of the new eigenvalues have absolute value at most
  $2 \sqrt{d -1}$.
If one repleatedly applied the corresponding 2-lifts to the $d$-regular
  complete graph, one would obtain an infinite sequence of
  $d$-regular Ramanujan graphs.
We prove a weak version of Bilu and Linial's conjecture:
  every $d$-regular graph has a signing
 in which
  all of the new eigenvalues are at most
  $2 \sqrt{d -1}$.
The difference between our result and the original conjecture
  is that we do not control the smallest new eigenvalue.
This is why we consider bipartite graphs.
The eigenvalues of the adjacency matrices of bipartite graphs are
  symmetric about zero (see, for example, \cite[Theorem 2.4.2]{GodsilBook}).
So, a bound on the smallest non-trivial eigenvalue follows from
  a bound on the largest.
We also use the fact that a 2-lift of a bipartite graph
   is also bipartite.
By repeatedly applying the corresponding 2-lifts to the $d$-regular
  complete bipartite graph, we obtain an infinite sequence
  of $d$-regular bipartite Ramanujan graphs.

\subsection{Irregular Ramanujan Graphs and Universal Covers}\label{intro:irreg}
We say that a bipartite graph is $(c,d)$-biregular if all vertices on one
  side of the bipartition have degree $c$ and all vertices on the other side
  have degree $d$.
The adjacency matrix of a $(c,d)$-biregular graph always has eigenvalues
  $\pm \sqrt{cd}$; these are its trivial eigenvalues.
Feng and Li~\cite{FengLi} (see also~\cite{LiSole}) prove a generalization of the Alon--Boppana bound that applies
  to $(c,d)$-biregular graphs:
 for all $\epsilon > 0$, all sufficiently large $(c,d)$-biregular graphs
  have a non-trivial eigenvalue that is at least $\sqrt{c-1} + \sqrt{d-1} - \epsilon$.
Thus, we say that a $(c,d)$-biregular graph is Ramanujan if all of its non-trivial
  eigenvalues have absolute value at most
$
  \sqrt{c-1} + \sqrt{d-1}.
$
We prove the existence of infinite families of $(c,d)$-biregular Ramanujan graphs for all $c, d \geq 3$.

The regular and biregular Ramanujan graphs discussed above are actually
  special cases of a more general phenomenon.
To describe it, we will require a construction known as the {\em universal cover}.
The universal cover of a graph $G$ is the infinite tree $T$ such that every
  connected lift of $G$ is a quotient of the tree (see, e.g., \cite[Section
  6]{hoory2006expander}).
It can be defined concretely by first fixing a 
  ``root'' vertex $v_{0}\in G$, and then placing one vertex in $T$ for every
  non-backtracking walk $(v_0,v_1,\ldots,v_\ell)$ of any length $\ell\in\mathbb{N}$
  starting at $v_0$, where a walk is non-backtracking if $v_{i-1}\neq v_{i+1}$
  for all $i$.
Two vertices of $T$ are adjacent if and only if 
  the walk corresponding to one can be obtained by appending one vertex to
  the walk corresponding to the other.
That is, the edges of $T$ are all of the form $(v_0,v_1,\ldots, v_\ell)\sim(v_0,v_1,\ldots,v_\ell,v_{\ell+1})$.
The universal cover of a graph is unique up to isomorphism, independent of the
  choice of $v_0$.

The adjacency matrix $A_T$ of the universal cover $T$ is an infinite-dimensional
  symmetric matrix.
We will be interested in the {\em spectral radius} $\rho(T)$ of $T$, which may
  be defined\footnote{In functional analysis, the spectral radius of an
  infinite-dimensional operator $A$ is traditionally defined to be the largest $\lambda$ for which 
  $(A-\lambda I)$ is unbounded. However, in the case of self-adjoint operators,
  this definition is equivalent to the one presented here 
  (see, for example, Theorem~VI.6 in \cite{FuncAnal}).} as:
  \begin{equation}\label{specrad}
  \rho(T) := \sup_{\|x\|_2=1}{\|A_Tx\|_2}
  \end{equation}
where $\|x\|_2^2:=\sum_{i=1}^\infty x(i)^2$ whenever the series converges.
Naturally, the spectral radius of a finite tree is defined to be the norm of its adjacency
  matrix.

With these notions in hand, we can state the definition of an irregular
  Ramanujan graph.
As before, the largest (and smallest, in the bipartite case) eigenvalues
  of finite adjacency matrices are considered trivial.
Greenberg \cite{greenberg} (see also~\cite{cioabua2006eigenvalues})
  showed that for every $\epsilon > 0$ and every
  infinite family of graphs that have the same universal cover $T$,
  all sufficiently large graphs in the family have a non-trivial eigenvalue
  that is at least $\rho (T) - \epsilon$.
Following Hoory, Linial, and Wigderson \cite[Definition
  6.7]{hoory2006expander}, we therefore define an arbitrary graph to be Ramanujan if all of its
  non-trivial eigenvalues are smaller in absolute value than the spectral 
  radius of its universal cover.

The universal cover of every $d$-regular graph is the infinite $d$-ary tree, whereas the universal 
  cover of every $(c,d)$-biregular graph is the infinite $(c,d)-$biregular tree in which the degrees alternate between $c$ and $d$ on every other level~\cite{LiSole}.
The former tree is known to have spectral radius $2 \sqrt{d-1}$ while the latter has
  a spectral radius of $\sqrt{c-1} + \sqrt{d-1}$ (see~\cite{godsilMohar,LiSole}).
Thus, a definition based on universal covers generalizes both the regular and
  biregular definitions of Ramanujan graphs, and the bound of 
  Greenberg generalizes both the Alon-Boppana and Feng-Li bounds.

In this general setting, we show that every graph $G$ has a 2-lift in which all of the new eigenvalues
  are less than the spectral radius of its universal cover.
%By applying these 2-lifts to the $(c,d)$-biregular complete bipartite graph,
%  we obtain an infinite family of $(c,d)$-biregular Ramanujan graphs.
Applying these 2-lifts inductively to any finite irregular bipartite Ramanujan graph yields an
  infinite family of irregular bipartite Ramanujan graphs whose degree distribution
  matches that of the initial graph (since taking a 2-lift simply doubles the
  number of vertices of each degree).
In particular, applying them to the
  $(c,d)$-biregular complete bipartite graph yields an infinite family of
  $(c,d)$-biregular Ramanujan graphs.
As far as we know, infinite families of irregular Ramanujan graphs were not
  known to exist prior to this work.

%Finally, we mention that, in general, infinite linear operators can exhibit behavior that is not seen in finite linear algebra.
%We do state one result in its full generality, since it plays a large role in the proof.
%We will be interested in the spectral properties of the universal cover, forcing us into the realm of functional analysis.  
%In this paper, we will try to avoid much of the functional analysis that comes from dealing with infinite trees by merely quoting the applicable results.
%For the most part, the reader can treat the adjacency operator of an infinite tree to be the limit of the adjacency matrices of increasingly large finite subtrees (we will discuss the validity of this assertion in Section~\ref{sec:related}, but the details are not necessary for the main results).

\subsection{Related Work} \label{sec:related}

There have been numerous studies of random lifts of graphs.
For some results on the spectra of random lifts, we point the reader to
  \cite{amit2002random,linial2005random,amit2006random,linial2010word,addario2010spectrum,lubetzky2011spectra}.
Friedman \cite{friedman2008} has proved that almost every $d$-regular graph almost meets the
  Ramanujan bound: he shows that for every $\epsilon > 0$
  the absolute value of all the non-trivial eigenvalues of
   almost every sufficiently large
  $d$-regular graph are at most $2 \sqrt{d-1} + \epsilon$.
In the irregular case, Puder~\cite{puder2012expansion}
  has shown that with high probability a high-order lift of a graph $G$ 
  has new eigenvalues that are bounded in absolute value by $\sqrt{3}\rho$,
  where $\rho$ is the spectral radius of the universal cover of $G$.

We remark that constructing bipartite Ramanujan graphs is at least as easy
  as constructing non-bipartite ones:
  the double-cover of a $d$-regular non-bipartite Ramanujan graph
  is a $d$-regular bipartite Ramanujan graph.
For many applications of expander graphs, we refer the reader to
  \cite{hoory2006expander}.
For those applications of expanders that just require upper bounds on the
  second eigenvalue, one can use bipartite Ramanujan graphs.
Some applications actually require bipartite expanders, while
  others require the non-bipartite ones.
For example, the explicit constructions of error correcting codes of
  Sipser and Spielman~\cite{sipserSpielman} require non-bipartite expanders,
  while the improvements of their construction 
 \cite{zemor2001expander,roth2006improved,ashikhmin2006decoding}
  require bipartite Ramanujan expanders.

\section{2-Lifts and The Matching Polynomial}\label{sec:matching}

For a graph $G$, let $m_{i}$ denote the number of matchings in $G$ with $i$ edges.
Set $m_{0} = 1$.
Heilmann and Lieb~\cite{HeilmannLieb} defined the \textit{matching polynomial} of $G$
  to be the polynomial
\[
  \mu_{G} (x) \defeq \sum_{i\geq 0} x^{n-2i} (-1)^{i} m_{i},
\]
where $n$ is the number of vertices in the graph.
They proved two remarkable theorems about the matching polynomial that we will exploit
  in this paper.
It is worth mentioning that the proofs of these theorems are elementary and short, relying
  only on simple recurrence formulas for the matching polynomial.

\begin{theorem}[Theorem 4.2 in \cite{HeilmannLieb}]\label{thm:matchingReal}
For every graph $G$, $\mu_{G} (x)$ has only real roots.
\end{theorem}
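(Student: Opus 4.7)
The plan is to prove the claim by strong induction on $n = |V(G)|$, after strengthening it to the following interlacing statement that is far better suited to induction: \emph{for every graph $G$ on $n$ vertices and every vertex $v \in V(G)$, the polynomial $\mu_G(x)$ has only real roots, and the roots of $\mu_{G-v}(x)$ interlace those of $\mu_G(x)$.} The desired real-rootedness is just the first half of this strengthened claim. The base cases ($n \le 2$) are immediate from the definition.

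For the inductive step, I would start from the standard vertex-deletion recurrence
$$\mu_G(x) = x\,\mu_{G-v}(x) - \sum_{u : (u,v) \in E} \mu_{G-u-v}(x),$$
obtained by partitioning the matchings of $G$ according to whether they cover $v$, and if so, via which incident edge. Applying the inductive hypothesis to the smaller graph $G - v$ with the vertex $u$, we learn that for each neighbor $u$ of $v$ the polynomial $\mu_{G-u-v}(x) = \mu_{(G-v)-u}(x)$ is real-rooted and is interlaced by $\mu_{G-v}(x)$. In other words, the single polynomial $\mu_{G-v}$ is a \emph{common interlacer} for the entire family $\{\mu_{G-u-v}\}_{u \sim v}$.

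The next step is to invoke the elementary lemma that a common interlacer of real-rooted polynomials with positive leading coefficients also interlaces every nonnegative linear combination of them. This gives that $\mu_{G-v}$ interlaces $S(x) := \sum_{u \sim v} \mu_{G-u-v}(x)$. From here the argument is a direct sign count: at each root $\lambda_i$ of $\mu_{G-v}$ one has $\mu_G(\lambda_i) = -S(\lambda_i)$, and these values alternate in sign as $i$ ranges over $1, \ldots, n-1$. Together with the asymptotics $\mu_G(x) \to +\infty$ as $x \to +\infty$ and $\mu_G(x) \to (-1)^n \infty$ as $x \to -\infty$ (since $\mu_G$ is monic of degree $n$), an intermediate-value argument locates $n$ real roots of $\mu_G$: one strictly above $\lambda_1$, one strictly below $\lambda_{n-1}$, and one in each of the $n-2$ intervals $(\lambda_{i+1}, \lambda_i)$. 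This is exactly the strengthened inductive claim for $G$ and $v$.

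The main obstacle I expect is not any single step but the clean formulation and proof of the common-interlacer lemma, together with careful handling of degenerate cases: when $\mu_{G-v}$ has repeated roots so that the interlacing is weak rather than strict, or when $v$ is isolated so that the sum is empty and one must check $x\,\mu_{G-v}$ directly. Since the common-interlacer lemma is the backbone of the ``method of interlacing polynomials'' developed in this paper, I expect the authors to state it once as a general tool and simply cite it here. A conceptually different alternative would be Godsil's path-tree argument, embedding $G$ into a tree $T_v(G)$ whose matching polynomial is divisible by $\mu_G$ and equals the characteristic polynomial of its symmetric adjacency matrix; but the direct interlacing induction above fits more naturally with the rest of the paper.
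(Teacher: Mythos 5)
The paper never proves this theorem; it is cited as a black box from Heilmann and Lieb (with the remark that the proof is ``elementary and short, relying only on simple recurrence formulas''), and nothing in the paper depends on knowing a proof. So there is no in-paper argument to compare against. Your reconstruction is the standard one — the vertex-deletion recurrence $\mu_G = x\mu_{G-v} - \sum_{u\sim v}\mu_{G-u-v}$ together with an interlacing induction, essentially as in Godsil's book — and the strengthening to ``$\mu_{G-v}$ interlaces $\mu_G$'' is exactly the right inductive invariant.

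However, you have the direction of interlacing reversed throughout the middle of the argument, in a way that conflicts both with your own stated invariant and with the paper's Definition~\ref{def:interlacing}. The inductive hypothesis applied to $G-v$ and $u$ gives that $\mu_{(G-v)-u}$ \emph{interlaces} $\mu_{G-v}$ (lower degree interlaces higher degree); you instead write that $\mu_{G-u-v}$ ``is interlaced by'' $\mu_{G-v}$, and then call $\mu_{G-v}$ a ``common interlacer'' of the family $\{\mu_{G-u-v}\}_{u\sim v}$. Since $\mu_{G-v}$ has degree $n-1$ and each $\mu_{G-u-v}$ has degree $n-2$, $\mu_{G-v}$ cannot interlace any of them, nor their sum $S$. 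The lemma you invoke (``a common interlacer also interlaces every nonnegative combination'') is a true statement about a fixed low-degree $g$ interlacing a family of high-degree $f_i$'s, but what your argument actually needs is the transposed statement: if $g_1,\ldots,g_k$, each monic of degree $n-2$, all interlace a fixed $f$ of degree $n-1$, then $\sum c_i g_i$ ($c_i\ge0$) again interlaces $f$ — equivalently, $\sum c_i g_i$ has sign $(-1)^{i-1}$ (or vanishes) at the $i$th largest root of $f$. That corrected statement is precisely what powers the sign count in your final paragraph, which is itself stated correctly, so the proof goes through after a local rewrite of the middle. You are right that repeated roots and the isolated-vertex case need care; the usual fix is to allow weak interlacing inequalities and, if desired, perturb edge weights slightly and pass to a limit.
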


%\nikhil{I changed this to max degree instead of $d$-regular. Also in Sec.5.}
\begin{theorem}[Theorem 4.3 in \cite{HeilmannLieb}]\label{thm:matchingBound}
For every graph $G$ of maximum degree $d$, all of the roots of $\mu_{G} (x)$
  have absolute value at most $2 \sqrt{d-1}$.
\end{theorem}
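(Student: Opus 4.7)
My plan is to prove this by reducing the problem to the spectral theory of trees via the \emph{path tree} construction. Fix any vertex $v$ of $G$ and let $T(G,v)$ denote the tree whose vertices are the self-avoiding walks in $G$ starting from $v$, with two such walks adjacent iff one is obtained from the other by appending a single new vertex. Because $G$ is finite so is $T(G,v)$, and because each non-root walk can be extended by at most $d-1$ previously unused neighbors, $T(G,v)$ has maximum degree at most $d$.

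The workhorse for everything is the vertex-deletion recurrence
$$\mu_G(x) = x\,\mu_{G-v}(x) - \sum_{u \sim v}\mu_{G-u-v}(x),$$
obtained by splitting matchings by whether they saturate $v$, and if so by which edge covers $v$. The first step is to use this recurrence to establish the divisibility
$$\mu_G(x) \mid \mu_{T(G,v)}(x).$$
The cleanest route is to prove by induction on $|V(G)|$ the stronger ratio identity $\mu_{G-v}(x)/\mu_G(x) = \mu_{T(G,v)-r}(x)/\mu_{T(G,v)}(x)$, where $r$ is the root of the path tree. Dividing the recurrence by $\mu_{G-v}(x)$ and applying the analogous recurrence at $r$ in $T(G,v)$ reduces the identity to the same identity for smaller graphs, provided one matches the root subtree of $T(G,v)$ hanging below the neighbor $(v,u)$ of $r$ with the path tree of $G-v$ rooted at $u$.

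The second step is the standard observation that for any tree $T$, the permutation expansion of $\det(xI - A_T)$ receives contributions only from involutions, since every other cycle type would force $T$ to contain a cycle; these involutions are in bijection with the matchings of $T$ and contribute with the right signs, giving $\mu_T(x) = \det(xI - A_T)$. Combined with step one, the roots of $\mu_G$ form a subset of the eigenvalues of the adjacency matrix $A_{T(G,v)}$.

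The final step is to bound those eigenvalues by $2\sqrt{d-1}$. Since $T(G,v)$ is a tree of maximum degree at most $d$, it embeds as an induced subtree of the infinite $d$-regular tree $T_d$, so $A_{T(G,v)}$ is a principal submatrix of $A_{T_d}$. For any unit vector $x$ supported on $V(T(G,v))$, the vector $A_{T_d}x$ restricted to $V(T(G,v))$ equals $A_{T(G,v)}x$, giving
$$\|A_{T(G,v)}x\|_2 \le \|A_{T_d}x\|_2 \le \rho(T_d) = 2\sqrt{d-1},$$
where the last equality is the classical computation of the spectral radius of the infinite $d$-regular tree via counting non-backtracking walks. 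The main obstacle is the divisibility identity in step one: the combinatorial bookkeeping needed to check that the recurrences on $G$ and on $T(G,v)$ match up term by term, together with the inductive handling of the connected components of $G - v$, is where all the real work lies.
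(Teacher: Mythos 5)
Your proof is correct, but it does not reproduce the argument the paper cites. The paper attributes this bound to Heilmann and Lieb, whose proof (as the paper itself notes) is a short elementary argument using only the vertex-deletion recurrence $\mu_G(x) = x\mu_{G-v}(x) - \sum_{u\sim v}\mu_{G-u-v}(x)$, with no trees or operator norms in sight. Your route instead reconstructs Godsil's path-tree argument, which the paper develops independently as Theorem~\ref{thm:godsil} and Lemma~\ref{lem:coverBound} in order to prove the stronger universal-cover bound; specializing Lemma~\ref{lem:coverBound} to a graph of maximum degree $d$ and using $\rho(T_d)=2\sqrt{d-1}$ recovers the present theorem exactly as you do. The tradeoff is real: Godsil's approach is conceptually cleaner and generalizes immediately to irregular graphs (which is why the paper needs it anyway), while the Heilmann--Lieb recurrence argument is self-contained and avoids both the divisibility theorem and the spectral radius computation for the infinite tree. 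One step in your sketch deserves tightening: the divisibility $\mu_G \mid \mu_{T(G,v)}$ does not follow from the ratio identity $\mu_{G-v}/\mu_G = \mu_{T-r}/\mu_T$ alone. You must also invoke the induction hypothesis on the branches of $T-r$ (each of which is a path tree of a component of $G-v$) to conclude that $\mu_{G-v}$ divides $\mu_{T-r}$, which is what makes the quotient $\mu_T = \mu_G\cdot\mu_{T-r}/\mu_{G-v}$ a genuine polynomial. You flag this as ``bookkeeping,'' which is fair in spirit, but it is the one genuinely nontrivial step in the argument and should be spelled out rather than waved at.
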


The preceding theorems will allow us to prove the existence of infinite families of
  $d$-regular bipartite Ramanujan graphs.
To handle the irregular case, we will require a refinement of these results due to Godsil.
This refinement uses the concept of
  a {\em path tree}, which was also introduced by Godsil (see  \cite{Godsil} or
  \cite[Section 6]{GodsilBook}).
Recall that a path in $G$ is a walk that does not visit any vertex twice.

\begin{definition}\label{def:pathtree} Given a graph $G$ and a vertex $u$,
  the path tree $P (G,u)$ contains one vertex for every path in $G$ 
  (with distinct vertices)
  that starts at $u$.
Two paths are adjacent if one can be
  obtained by appending one vertex to the other.
That is,
 all edges of $P(G,u)$ are all of the form $(u,v_1,\ldots,
  v_\ell)\sim(u,v_1,\ldots,v_\ell,v_{\ell+1})$.
\end{definition}

The path tree provides a natural relationship between the roots of the matching
  polynomial of a graph and the spectral radius of its universal cover:
\begin{theorem}[Godsil \cite{Godsil}] \label{thm:godsil} Let $P (G,u)$ be a
path tree of $G$. Then the matching polynomial of $G$ divides the characteristic polynomial
of the adjacency matrix of $P (G,u)$. In particular, all of the roots of
$\mu_G(x)$ are real and have absolute value at most $\rho(P (G,u))$.
\end{theorem}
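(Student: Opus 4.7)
The plan is to prove the divisibility $\mu_G(x) \mid \phi_{P(G,u)}(x)$ by induction on $|V(G)|$; the two conclusions about the roots of $\mu_G$ then follow at once from the fact that the adjacency matrix of the finite tree $P(G,u)$ is real symmetric and therefore has only real eigenvalues, all bounded in absolute value by $\rho(P(G,u))$, and any polynomial divisor inherits both properties. To make the induction go through cleanly, I will carry, alongside the divisibility, the rational identity
\begin{equation*}
\frac{\mu_G(x)}{\mu_{G \setminus u}(x)} = \frac{\phi_{P(G,u)}(x)}{\phi_{P(G,u) \setminus u}(x)},
\end{equation*}
which is what makes the recursion close. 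The base case is trivial since both polynomials reduce to $x$.

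The inductive step rests on two elementary recurrences. Cofactor expansion along the row of $u$, together with the absence of cycles in any tree $T$, yields
\begin{equation*}
\phi_T(x) = x\,\phi_{T \setminus u}(x) - \sum_{w \sim_T u} \phi_{T \setminus \{u,w\}}(x),
\end{equation*}
and partitioning matchings by whether $u$ is matched gives the analogous
\begin{equation*}
\mu_G(x) = x\,\mu_{G \setminus u}(x) - \sum_{v \sim_G u} \mu_{G \setminus \{u,v\}}(x).
\end{equation*}
By construction, the neighbors of the root in $P(G,u)$ are exactly the length-$1$ paths $(u,v)$ with $v \sim_G u$, and the subtree of $P(G,u) \setminus u$ hanging off $(u,v)$ is precisely $P(G \setminus u, v)$. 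Thus $\phi_{P(G,u) \setminus u}(x) = \prod_{v \sim_G u} \phi_{P(G \setminus u, v)}(x)$. Dividing the tree recurrence applied to $T = P(G,u)$ by $\phi_{P(G,u)\setminus u}$, invoking the inductive ratio identity for $(G \setminus u, v)$ at each neighbor $v$, and collecting via the matching recurrence collapses the right-hand side to $\mu_G(x) / \mu_{G \setminus u}(x)$, establishing the ratio identity for $G$.

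Finally, to upgrade the rational identity to divisibility, I will note that the inductive divisibility applied to $(G \setminus u, v)$ for any single neighbor $v$ of $u$ gives $\mu_{G \setminus u} \mid \phi_{P(G \setminus u, v)}$; since $\phi_{P(G \setminus u, v)}$ appears as a factor in the product expansion of $\phi_{P(G,u) \setminus u}$, we conclude $\mu_{G \setminus u} \mid \phi_{P(G,u) \setminus u}$. Writing $\phi_{P(G,u) \setminus u} = \mu_{G \setminus u} \cdot R$ and substituting into the cross-multiplied ratio identity $\mu_G \cdot \phi_{P(G,u) \setminus u} = \mu_{G \setminus u} \cdot \phi_{P(G,u)}$, the monic factor $\mu_{G \setminus u}$ cancels and we obtain $\phi_{P(G,u)} = \mu_G \cdot R$, completing the induction. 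The main obstacle is really just this interplay: neither the rational identity nor the divisibility alone is strong enough to sustain the induction on its own, and the two have to be carried in tandem, with the product factorization $\phi_{P(G,u) \setminus u} = \prod_{v \sim u} \phi_{P(G \setminus u, v)}$ serving as the bridge that promotes the identity to a polynomial divisibility.
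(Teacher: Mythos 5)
The paper cites this as Godsil's theorem without reproducing the proof, so there is no in-paper argument to compare against; your reconstruction follows Godsil's standard argument in its broad outline (the ratio identity carried alongside divisibility, the tree and matching recurrences, the product decomposition of $P(G,u)\setminus u$), but the final promotion from ratio identity to divisibility has a real error. You claim that the inductive divisibility applied to $(G\setminus u, v)$ for a single neighbor $v$ of $u$ yields $\mu_{G\setminus u}\mid \phi_{P(G\setminus u,v)}$. This is false whenever $G\setminus u$ is disconnected: $P(G\setminus u, v)$ only sees the component of $G\setminus u$ containing $v$, so the matching polynomials of the other components cannot divide $\phi_{P(G\setminus u,v)}$. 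For a concrete counterexample take $G=K_{1,3}$ with center $u$; then $G\setminus u$ is three isolated vertices, $\mu_{G\setminus u}(x)=x^3$, but $\phi_{P(G\setminus u, v)}(x)=x$. Worse, the divisibility $\mu_H\mid\phi_{P(H,w)}$ is already false for disconnected $H$ (take $H$ two disjoint edges: $\mu_H=(x^2-1)^2$ while $\phi_{P(H,w)}=x^2-1$), so the inductive hypothesis cannot be formulated over all graphs: the theorem requires $G$ connected.

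The repair is the one Godsil makes: run the induction over connected graphs only, and in the divisibility step decompose $G\setminus u$ into its connected components $H_1,\ldots,H_r$. Connectivity of $G$ guarantees each $H_i$ contains at least one neighbor $v_i$ of $u$. The inductive divisibility gives $\mu_{H_i}\mid\phi_{P(H_i,v_i)}$, and since $\mu_{G\setminus u}=\prod_i\mu_{H_i}$, $P(G\setminus u, v_i)=P(H_i,v_i)$, and the $\phi_{P(H_i,v_i)}$ appear as distinct factors of $\phi_{P(G,u)\setminus u}=\prod_{v\sim u}\phi_{P(G\setminus u,v)}$, we obtain $\mu_{G\setminus u}\mid\phi_{P(G,u)\setminus u}$; your cancellation argument then finishes. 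Your derivation of the ratio identity itself is correct, and it does survive the restriction to connected graphs because the ratio for $(G\setminus u,v)$ depends only on the component of $G\setminus u$ containing $v$ (the matching polynomials of the other components cancel in numerator and denominator), so the only spot that needs repair is the passage to divisibility.
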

\begin{lemma}\label{lem:coverBound}
Let $G$ be a graph and let $T$ be its universal cover. 
Then the roots of $\mu_G(x)$ are bounded in absolute value
  by $\rho(T)$.
\end{lemma}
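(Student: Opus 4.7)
The plan is to combine Godsil's theorem (Theorem~\ref{thm:godsil}) with a simple monotonicity argument for spectral radii under taking induced subgraphs. Godsil's theorem already bounds every root of $\mu_{G}(x)$ in absolute value by $\rho(P(G,u))$ for any choice of root vertex $u$, so it suffices to exhibit an inequality $\rho(P(G,u))\le \rho(T)$.

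The first step is to realize $P(G,u)$ as an induced subtree of the universal cover $T$, with $T$ built using $v_{0}=u$ as its root. A vertex of $P(G,u)$ is a sequence $(u,v_{1},\ldots,v_{\ell})$ of distinct vertices of $G$ forming a walk in $G$; since distinctness forces $v_{i-1}\ne v_{i+1}$, every such sequence is also a non-backtracking walk at $u$, hence a vertex of $T$. Moreover, both trees use the same adjacency rule (append one vertex to extend the walk), so the natural map $P(G,u)\hookrightarrow T$ is an edge-preserving injection, and $P(G,u)$ is an induced subtree of $T$.

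The second step is to conclude $\rho(P(G,u))\le \rho(T)$ from the identification above, using the general fact that the spectral radius of a nonnegative symmetric operator is monotone under passage to an induced subgraph. Concretely, since $A_{T}$ has nonnegative entries, the supremum in~(\ref{specrad}) can be restricted to nonnegative $x$ (because $|A_{T}x|\le A_{T}|x|$ coordinatewise). Given any finitely supported $x\ge 0$ on $V(P(G,u))$ with $\|x\|_{2}=1$, extend by zeros to $\tilde{x}$ on $V(T)$. For each $v\in V(P(G,u))$, the neighbors summed in $(A_{P(G,u)}x)(v)$ form a subset of the neighbors summed in $(A_{T}\tilde{x})(v)$, so $0\le (A_{P(G,u)}x)(v)\le (A_{T}\tilde{x})(v)$ pointwise, giving $\|A_{P(G,u)}x\|_{2}\le \|A_{T}\tilde{x}\|_{2}\le \rho(T)$. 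Taking the supremum over such $x$ yields $\rho(P(G,u))\le \rho(T)$, and combining with Theorem~\ref{thm:godsil} completes the proof.

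The main thing to get right is the infinite-dimensional nature of $T$: $\rho(T)$ must be interpreted via the operator-norm formula~(\ref{specrad}) rather than as a largest eigenvalue, and the argument for $P(G,u)$ also needs to be stated in that language when $P(G,u)$ itself is infinite. Since everything reduces to $\ell^{2}$-norm estimates on finitely supported test vectors, no real functional-analytic subtlety arises, and I expect this step to be essentially mechanical. The only genuine conceptual content of the lemma is the elementary observation that every path is a non-backtracking walk, which embeds the path tree inside the universal cover.
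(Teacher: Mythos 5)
Your proposal is correct and follows essentially the same route as the paper: invoke Godsil's theorem, observe that paths are non-backtracking walks so that $P(G,u)$ embeds as an induced subtree of $T$, and conclude $\rho(P(G,u))\le\rho(T)$ by an $\ell^2$-norm comparison on finitely supported test vectors. (Two minor simplifications: $P(G,u)$ is always finite since $G$ is, and because the embedding is induced you get $(A_{P(G,u)}x)(v)=(A_T\tilde x)(v)$ for $v\in V(P(G,u))$ directly, so the nonnegativity detour is not needed.)
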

\begin{proof} Let $u$ be any vertex of $G$ and let $P$ be the path tree rooted
at $u$. Since the paths that correspond to the vertices of $P$ are themselves non-backtracking walks
  (as defined in Section~\ref{intro:irreg}), $P$ is a finite induced subgraph of
  the universal cover $T$, and  $A_P$ is a finite submatrix of
$A_T$. By Theorem~\ref{thm:godsil}, the roots of $\mu_G$ are bounded by
\begin{align*}
\|A_{P}\|_{2} &= \sup_{\|x\|_{2}=1}\|A_{P}x\|_{2} 
\\&\le \sup_{\|y\|_{2}=1,\mathrm{supp}(y)\subset P}\|A_Ty\|_{2} 
\\&\le \sup_{\|y\|_{2}=1}\|A_Ty\|_{2} = \rho(T),
\end{align*}
as desired.
\end{proof}

We remark that one can directly prove
  an upper bound of $2\sqrt{d-1}$ on the spectral radius of a path tree of a $d$-regular graph
  and an upper bound of $\sqrt{c-1}+\sqrt{d-1}$ on the spectral radius of a path tree of a  $(c,d)$-regular bipartite 
  graph without considering infinite trees.
We point the reader to Section 5.6 of Godsil's book \cite{GodsilBook} for an elementary argument.

We now recall an identity of Godsil and Gutman: the
  expected characteristic polynomial of a random signing of the adjacency matrix
  of a graph is equal to its matching polynomial.
To associate a signing of the edges of $G$ with a vector in $\setof{\pm 1}^{m}$,
  we choose an arbitrary ordering of the $m$ edges of $G$,
  denote the edges by $e_{1}, \dotsc , e_{m}$, and denote a signing of
  these edges by $s \in \setof{\pm 1}^{m}$.
We then let $A_{s}$ denote the signed adjacency matrix corresponding to $s$, 
  and define $f_{s} (x) =\mydet{xI-A_s}$ to be characteristic polynomial of $A_{s}$.
%Let $e_{1}, \dots, e_{m}$ be the edges of $G$ (ordered arbitrarily).
%We will use $s \in \setof{\pm 1}^{m}$ to denote a signing of the edges of $G$, as discussed in Section~\ref{intro:lifts}.
%We use $A_{s}$ to denote the signed adjacency matrix corresponding to a given signing $s$ and
%  $f_{s} (x)$ to denote the characteristic polynomial of $A_{s}$.
%The following theorem shows the relationship between the matching polynomial and
%the collection $\{ f_s \}_{s\in\{\pm\}^m}$.

\begin{theorem}[Corollary 2.2 of Godsil and Gutman~\cite{GodsilGutman}]\label{thm:matching}
\[
\expec{s \in \setof{\pm 1}^{m}}{f_{s} (x)}
 = 
 \mu_{G} (x).
\]
\end{theorem}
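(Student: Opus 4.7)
The plan is to expand $f_{s}(x) = \mydet{xI - A_{s}}$ via the Leibniz permutation formula and then interchange expectation with the sum, exploiting that the signs $s_{e}$ are independent uniform $\pm 1$ random variables with mean zero. Writing $f_{s}(x) = \sum_{\sigma \in S_{n}} \mathrm{sgn}(\sigma) \prod_{i=1}^{n} (xI - A_{s})_{i,\sigma(i)}$ and using that $A_{s}$ has zero diagonal, each fixed point of $\sigma$ contributes a factor of $x$, while a non-fixed point $i$ contributes $-s(i,\sigma(i))$ if $\setof{i,\sigma(i)}$ is an edge of $G$ and $0$ otherwise. Thus the nonzero terms correspond to permutations whose non-trivial cycles are directed cycles in $G$.

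Next I would decompose each contributing $\sigma$ into its cycle structure and take expectation. On any cycle of length $\ell \geq 3$, each undirected edge of $G$ appears exactly once in the corresponding factor of the product, so independence of the signs together with $\mathbb{E}[s_{e}] = 0$ forces the expectation to vanish. Hence only permutations in which every non-trivial cycle has length $2$ survive; these are precisely the involutions whose set of transpositions forms a matching $M$ of $G$. For such a $\sigma$ with matching of size $i$, each transposition $\setof{u,v}$ contributes $(-s_{uv})(-s_{vu}) = s_{uv}^{2} = 1$, the fixed points contribute $x^{n-2i}$, and $\mathrm{sgn}(\sigma) = (-1)^{i}$ because $\sigma$ is a product of $i$ transpositions.

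Summing these contributions and grouping by matching size yields $\expec{s}{f_{s}(x)} = \sum_{i \geq 0} (-1)^{i} m_{i} x^{n-2i} = \mu_{G}(x)$, as desired. There is no real obstacle here: the identity is essentially a bookkeeping exercise. The only subtleties are the sign tracking (the $-1$'s from the off-diagonal entries of $-A_{s}$ pair up harmlessly within each transposition, leaving the overall sign to come from $\mathrm{sgn}(\sigma) = (-1)^{i}$), and the combinatorial observation that long cycles self-destruct because each edge of $G$ appears exactly once in a cycle's contribution to the product.
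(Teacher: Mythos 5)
Your proof is correct and follows essentially the same route as the paper's: expand $\det(xI-A_s)$ via the Leibniz permutation formula, use independence and $\expec{}{s_e}=0$ to kill every permutation that traverses some edge an odd number of times (in particular, any cycle of length at least $3$), and observe that the surviving involutions are exactly the matchings, with $\mathrm{sgn}(\sigma)=(-1)^i$ giving the alternating signs of $\mu_G$. If anything, your sign bookkeeping is a touch more explicit than the paper's, which loosely refers to ``inversions'' where it means ``transpositions,'' but the argument is the same.
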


For the convenience of the reader, we present a simple proof of this theorem
  in Appendix~\ref{sec:ggtheorem}.

%As mentioned in the introduction, knowing the roots of the sum of polynomials 
%  (in general) tells nothing about the roots of the original polynomials.
To prove that a good lift exists, it suffices, by Theorems
\ref{thm:matchingBound} and \ref{thm:matching}, to show that there is a signing $s$ so that the largest root of
  $f_{s} (x)$ is at most the largest root of $\expec{s \in \setof{\pm
1}^{m}}{f_{s} (x)}$.
To do this, we prove that the polynomials $\{f_{s} (x)\}_{s\in\{\pm 1\}^m}$
  are what we call an interlacing family.
We define interlacing families and examine their properties in the next section.
%We prove that the polynomials $p_{s} (x)$ are an interlacing family in Section~\ref{sec:rr}. 

\section{Interlacing Families}\label{sec:interlacing}

%\dan{It might be clearer if the put the interlacing polynomial inside, giving it degree $d-1$
%  and the polynomial we care about degree $d$.}\\
%\nikhil{Already done. It might be even clearer if we point out that there is
%nothing special about the common interlacing polynomial, other than the fact
%that it separates the root zones of the polynomials we are interested in.}

\begin{definition}\label{def:interlacing}
We say that a polynomial $g(x) = \prod_{i=1}^{n-1} (x - \alpha_{i})$ \emph{interlaces} a polynomial 
  $f(x) = \prod_{i=1}^{n} (x - \beta_{i})$ if
\[
  \beta_{1} \leq \alpha_{1} \leq \beta_{2} \leq \alpha_{2} \leq \dotsb \leq
  \alpha_{n-1}\leq \beta_{n} 
\]
We say that polynomials $f_{1}, \dotsc , f_{k}$ have a \emph{common interlacing}
  if there is a polynomial $g$ so that $g$ interlaces $f_i$ for each $i$.
%We remark that this is equivalent to saying that there are closed intervals
%  $\Lambda_1,\ldots,\Lambda_n\subset\mathbb{R}$ such that for every $j=1,\ldots,n$ and
%  $i=1,\ldots,k$, the $j^$th largest root of $f_i$ is contained in $\Lambda_j$,
%  and $\sup \Lambda_i\le \inf \Lambda_j$ whenever $i<j$.
\end{definition}

Let $\beta_{i,j}$ be the $j$th smallest root of $f_{i}$.
The polynomials $f_{1}, \dotsc , f_{k}$ have a common interlacing if 
  and only if there are
  numbers $\alpha_{0} \leq \alpha_{1} \leq  \dotsb \leq \alpha_{n}$ so that
  $\beta_{i,j} \in [\alpha_{j-1}, \alpha_{j}]$ for all $i$ and $j$.
The numbers $\alpha_{1}, \dotsc , \alpha_{n-1}$ come from the roots of the polynomial $g$,
  and $\alpha_{0}$ ($\alpha_{n}$) can be chosen to be any number that is smaller (larger) than
  all of the roots of all of the $f_{i}$.

\begin{lemma}\label{lem:interlacing}
Let $f_{1}, \dotsc , f_{k}$ be polynomials of the same degree that are real-rooted and have positive leading coefficients.
Define
\[
  f_{\emptyset} = \sum_{i=1}^{k} f_{i}.
\]
If $f_{1}, \dotsc , f_{k}$ have a common interlacing,
  then
% $q_{\emptyset}$ is also real-rooted and
  there exists an $i$ so that the largest root of $f_{i}$ is at most the largest root of
  $f_{\emptyset}$.
\end{lemma}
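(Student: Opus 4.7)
The plan is to exploit the common interlacing to find a real point at which every $f_i$ is nonpositive, and then use the positive leading coefficients to pinpoint the largest root of $f_\emptyset$. Let $\alpha$ denote the largest root of a common interlacer $g$ (call it $\alpha_{n-1}$ in the notation following Definition 4.1). By the definition of common interlacing, for every $i$ the second-largest root $\beta_{i,n-1}$ lies at or below $\alpha$, while the largest root $\beta_{i,n}$ lies at or above $\alpha$.

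Next, I would use the standard sign pattern of a real-rooted polynomial with positive leading coefficient. Since $f_i$ is positive on $(\beta_{i,n},\infty)$ and changes sign at each simple root (with the obvious modification for multiplicities), on the interval $[\beta_{i,n-1},\beta_{i,n}]$ the polynomial $f_i$ is nonpositive. In particular $f_i(\alpha)\le 0$ for every $i$, so summing yields $f_\emptyset(\alpha)\le 0$. Because $f_\emptyset$ also has positive leading coefficient, $f_\emptyset(x)\to+\infty$ as $x\to\infty$, so by continuity $f_\emptyset$ has at least one real root that is $\ge \alpha$. Let $r$ denote the largest such root.

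Finally, I would argue by contradiction that some $f_i$ has largest root at most $r$. Suppose instead that $\beta_{i,n}>r$ for every $i$. The set of points $\beta_{i,n-1}$ is finite, so I can choose $x$ in the open interval $(r,\min_i \beta_{i,n})$ avoiding all the values $\beta_{i,n-1}$. For each $i$, this $x$ lies strictly inside $(\beta_{i,n-1},\beta_{i,n})$ (using $r\ge\alpha\ge\beta_{i,n-1}$), so $f_i(x)<0$ strictly, and therefore $f_\emptyset(x)<0$. But $r$ is the largest root of $f_\emptyset$ and the leading coefficient is positive, so $f_\emptyset(x)>0$ for all $x>r$, a contradiction. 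Hence there must exist $i$ with $\beta_{i,n}\le r$, which is exactly the conclusion.

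The proof is mostly sign-chasing once the right separating point $\alpha$ has been identified; the only subtle step is the last one, where I have to avoid the possibility that $r$ itself coincides with some $\beta_{i,n-1}$ (which would make $f_i(r)=0$ rather than strictly negative and destroy a naive contradiction at the point $r$). I expect this to be the main technical wrinkle, and passing to a point slightly to the right of $r$ (chosen off the finite exceptional set) cleanly handles it.
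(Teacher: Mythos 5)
Your proof is correct and takes essentially the same route as the paper: identify the largest root $\alpha$ of the common interlacer, use the sign constraint $f_i(\alpha)\le 0$ to force $f_\emptyset$ to have its largest root $r\ge\alpha$, and exploit the fact that each $f_i$ has at most one root above $\alpha$. The paper finishes directly by noting that $f_\emptyset(r)=0$ implies some $f_i(r)\ge 0$ and hence that $f_i$'s top root lies in $[\alpha,r]$, whereas you run the equivalent step as a contradiction just past $r$; also, your extra precaution of choosing $x$ off the finite set $\{\beta_{i,n-1}\}$ is unnecessary, since any $x>r\ge\alpha\ge\beta_{i,n-1}$ already exceeds every such value strictly.
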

\begin{proof}
Let the polynomials be of degree $n$.
Let $g$ be a polynomial that interlaces all of the $f_{i}$, 
  and  let $\alpha_{n-1}$ be the largest root of $g$.
As each $f_{i}$ has a positive leading coefficient, it is positive for sufficiently large $x$.
As each $f_{i}$ has exactly one root that is at least $\alpha_{n-1}$, each
  $f_{i}$ is non-positive at $\alpha_{n-1}$.
So, $f_{\emptyset}$ is also non-positive at $\alpha_{n-1}$, and eventually
  becomes positive.
This tells us that $f_{\emptyset}$ has a root that is at least $\alpha_{n-1}$,
  and so its largest root is at least $\alpha_{n-1}$.
Let $\beta_{n}$ be this root.

As $f_{\emptyset}$ is the sum of the $f_{i}$, there must be some $i$
  for which $f_{i} (\beta_{n}) \geq 0$.
As $f_{i}$ has at most one root that is at least $\alpha_{n-1}$,
  and $f_{i} (\alpha_{n-1}) \leq 0$, the largest root of $f_{i}$  
  is it at least $\alpha_{n-1}$ and at most $\beta_{n}$.
\end{proof}
%\nikhil{should we mention that the proof actually shows that for every $k$ 
%$\lambda_k(f_\emptyset)\in \mathrm{conv}(f_s)$? This addresses one of the
%reviewer comments about what happens to the smallest root, etc.}

One can show that the assumptions of the lemma imply that $f_{\emptyset}$
  is itself a real-rooted polynomial. 
The conclusion of the lemma also holds for the $k$th largest root 
  by a similar argument.
However, we will not require these facts here.

If the polynomials do not have a common interlacing, the sum may fail to be real rooted:
 consider $(x+1) (x+2) + (x-1) (x-2)$.
Even if the sum of two polynomials is real rooted, the conclusion of Lemma~\ref{lem:interlacing}
  may fail to hold if the interval containing the largest roots of each polynomial overlaps
  the interval containing their second-largest roots.
For example,
  consider the sum of the polynomials $(x+5) (x-9) (x-10)$ and
  $(x+6) (x-1) (x-8)$.
It has roots at approximately $-5.3$, $6.4$, and $7.4$, so 
  its largest root is smaller than the largest root of both polynomials
  of which it is the sum.

\begin{definition}\label{def:family}
Let $S_{1}, \dots , S_{m}$ be finite sets and for every assignment $s_{1}, \dots, s_{m} \in S_{1} \times \dots \times S_{m}$
  let $f_{s_{1}, \dots , s_{m}} (x)$ be a real-rooted degree $n$ polynomial with positive leading coefficient.
For a partial assignment $s_1, \dots, s_k \in S_1 \times \ldots \times S_k$ with $k < m$, define
\[
  f_{s_{1},\dots , s_{k}} \defeq
\sum_{s_{k+1} \in S_{k+1}, \dots , s_{m} \in S_{m}}
  f_{s_{1}, \dots ,s_{k}, s_{k+1}, \dots , s_{m}},
\]
as well as
\[
  f_{\emptyset} \defeq \sum_{s_{1} \in S_{1}, \dots , s_{m} \in S_{m}}
  f_{s_{1}, \dots , s_{m}}.
\]

We say that the polynomials $\{f_{s_{1}, \dots , s_{m}} \}_{s_{1}, \dots , s_{m}}$
  form an \textit{interlacing family} if for all $k=0,\ldots, m-1$, and all
  $s_{1}, \dots, s_{k} \in S_{1} \times \dots \times S_{k}$,
  the polynomials
\[
  \{f_{s_{1}, \dots , s_{k},t}\}_{t\in S_{k+1}}
\]
have a common interlacing.
\end{definition}

\begin{theorem}\label{thm:interlacing}
Let $S_{1}, \dots , S_{m}$
  be finite sets and let
 $\setof{f_{s_{1}, \dots , s_{m}} }$ be an interlacing family of polynomials.
Then, there exists some $s_{1},\dots , s_{m} \in S_{1} \times \dots \times S_{m}$
  so that the largest root of
  $f_{s_{1}, \dots , s_{m}}$ is less than the largest root of $f_{\emptyset}$.
\end{theorem}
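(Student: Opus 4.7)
The plan is to construct the desired assignment $s_1,\dots,s_m$ greedily by descending through the tree of partial assignments, applying Lemma~\ref{lem:interlacing} once at each level. At stage $k$, having already fixed $s_1,\dots,s_k$, the collection $\{f_{s_1,\dots,s_k,t}\}_{t \in S_{k+1}}$ shares a common interlacer by the interlacing-family hypothesis, and by definition $f_{s_1,\dots,s_k} = \sum_{t\in S_{k+1}} f_{s_1,\dots,s_k,t}$. Provided the hypotheses of Lemma~\ref{lem:interlacing} hold for this collection, the lemma produces some $t=s_{k+1}$ for which the largest root of $f_{s_1,\dots,s_{k+1}}$ does not exceed that of $f_{s_1,\dots,s_k}$. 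Iterating $m$ times and chaining the $m$ resulting inequalities yields a leaf $f_{s_1,\dots,s_m}$ whose largest root is at most that of $f_\emptyset$.

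The one thing to check before this descent works is that each intermediate sum $f_{s_1,\dots,s_k}$ is real-rooted with positive leading coefficient, so that Lemma~\ref{lem:interlacing} can actually be invoked at every level. The positive-leading-coefficient condition is automatic, since each $f_{s_1,\dots,s_k}$ is a nonnegative sum of degree-$n$ polynomials with positive leading coefficient. Real-rootedness I would establish by downward induction on $k$: the case $k=m$ is part of the data, and if every $f_{s_1,\dots,s_{k+1}}$ is real-rooted then $f_{s_1,\dots,s_k}$ is a sum of real-rooted polynomials sharing a common interlacer, which is itself real-rooted. This last assertion is the fact alluded to in the remark following Lemma~\ref{lem:interlacing}; it can be proved by a sign-change argument along the roots of the common interlacer nearly identical to the one used in that lemma's proof, forcing $n-1$ sign changes in the interior and obtaining the final root from the positive leading coefficient.

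The main obstacle is precisely this real-rootedness step, and the feature of Lemma~\ref{lem:interlacing} that makes the plan go through cleanly is that the lemma requires only its summands, and not their sum, to be real-rooted. This lets the downward induction on $k$ proceed without circularity: at each level the children are already known to be real-rooted by the previous step of the induction, and the lemma then simultaneously (i) lets us conclude real-rootedness of the parent and (ii) furnishes the desired $s_{k+1}$. Once all nodes of the tree are known to be real-rooted, the greedy descent from $f_\emptyset$ to $f_{s_1,\dots,s_m}$ produces the required assignment with no further difficulty.
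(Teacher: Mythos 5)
Your greedy descent is precisely the paper's argument: apply Lemma~\ref{lem:interlacing} at each level to the siblings $\{f_{s_1,\dots,s_k,t}\}_{t\in S_{k+1}}$, whose sum is $f_{s_1,\dots,s_k}$, and chain the resulting inequalities on the largest roots from $f_\emptyset$ down to a leaf. The downward induction you add to establish real-rootedness of the partial sums is sound but unnecessary: by Definition~\ref{def:interlacing} a polynomial can admit an interlacer only if all of its roots are real, so the common-interlacing requirement in Definition~\ref{def:family} already forces every $f_{s_1,\dots,s_k}$ with $k\ge 1$ to be real-rooted, and (together with your correct observation that Lemma~\ref{lem:interlacing} never requires the \emph{sum} to be real-rooted) the hypotheses of the lemma are available at every level directly, with no induction and no appeal to the remark following the lemma.
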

\begin{proof}
From the definition of an interlacing family, we know that the polynomials $\{ f_t\}$ for ${t \in S_1}$
  have a common interlacing and that their sum is $f_{\emptyset}$.
So, Lemma~\ref{lem:interlacing} tells us that one of the polynomials has largest root at most the largest root of $f_{\emptyset}$.
We now proceed inductively.
For any $s_{1}, \dots , s_{k}$, we know that the polynomials
  $\{f_{s_{1}, \dots , s_k, t}\}$ for $t \in S_{k+1}$ have a common interlacing and
  that their sum is $f_{s_{1}, \dots , s_{k}}$.
So, for some choice of $t$ (say $s_{k+1}$) the largest root of the polynomial
  $f_{s_{1}, \dots , s_{k+1}}$
  is at most the largest root of
  $f_{s_{1}, \dots , s_{k}}$.
\end{proof}

We will prove that the polynomials $\setof{f_{s}}_{s \in \setof{\pm 1}^{m}}$ defined in Section~\ref{sec:matching} are an interlacing family.
According to definition \ref{def:family}, this requires establishing the
  existence of certain common interlacings.
There is a systematic way to do this based on the fact that common interlacings
  are equivalent to real-rootedness statements.
In particular the following result seems to have been discovered a number of times.
It appears as Theorem~$2.1$ of Dedieu~\cite{Dedieu}, (essentially) as
Theorem~$2'$ of Fell~\cite{Fell}, and as (a special case of) Theorem 3.6 of Chudnovsky and
Seymour~\cite{ChudnovskySeymour}.
%In the case that the roots of $f$ and $g$ are distinct, it appears as Proposition~$1.35$
%  in Fisk~\cite{FiskBook}. 

\begin{lemma}\label{lem:Fisk}
Let $f_1,\ldots, f_k$ be (univariate) polynomials of the same degree with
positive leading coefficients. Then $f_1,\ldots, f_k$ have a common interlacing if
and only if $\sum_{i=1}^k\lambda_i f_i$ is real rooted for all convex
combinations $\lambda_i\ge 0, \sum_{i=1}^k\lambda_i=1$.
\end{lemma}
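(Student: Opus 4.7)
The plan is to prove both directions separately, handling the forward implication by a direct sign-evaluation argument and the reverse by a reduction to $k=2$ followed by a continuity-of-roots argument.

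For $(\Rightarrow)$, suppose the $f_i$ share a common interlacer $g$ with roots $\alpha_1 \leq \ldots \leq \alpha_{n-1}$, and augment this list with $\alpha_0$ smaller than every root of every $f_i$ and $\alpha_n$ larger than all of them. By the definition of common interlacing, the $j$-th smallest root of each $f_i$ lies in $[\alpha_{j-1},\alpha_j]$, so exactly $j$ roots of each $f_i$ lie at or below $\alpha_j$. Hence $f_i(\alpha_j)$ has sign $(-1)^{n-j}$, independently of $i$ (with a harmless perturbation of the $\alpha_j$ if some $f_i(\alpha_j)=0$), so any convex combination $h = \sum_i \lambda_i f_i$ has the same sign at $\alpha_j$. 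The alternating sign pattern across $\alpha_0,\ldots,\alpha_n$ then forces $h$ to have at least $n$ real roots, completing this direction.

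For $(\Leftarrow)$, I would first reduce to $k=2$: the family $\{f_i\}$ shares a common interlacer if and only if $\beta_{i,j} \leq \beta_{i',j+1}$ for every pair $i,i'$ and every $j$, so if no common interlacer exists there is a specific pair $(i,i')$ for which even $f_i$ and $f_{i'}$ alone fail to have one. Since every convex combination of $f_i$ and $f_{i'}$ is also a convex combination of the full family, it suffices to treat two polynomials. So suppose $f_1,f_2$ are such that every $h_t \defeq t f_1 + (1-t) f_2$ is real-rooted, and assume for contradiction that $\beta_{1,j} > \beta_{2,j+1}$. Pick $x_0 \in (\beta_{2,j+1}, \beta_{1,j})$ that is not a root of $f_1$ or $f_2$, and let $p(t)$ count (with multiplicity) the roots of $h_t$ strictly less than $x_0$. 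Since all $h_t$ are real-rooted of fixed degree $n$, their roots depend continuously on $t$, so $p(t)$ is locally constant wherever $h_t(x_0) \neq 0$. Because $h_t(x_0)$ is linear in $t$, it has at most one zero $t_0 \in [0,1]$. At such a $t_0$, writing $h_{t_0}(x) = (x-x_0)^m q(x)$ with $q(x_0) \neq 0$ and expanding $h_{t_0+\epsilon}(x) = h_{t_0}(x) + \epsilon(f_1(x)-f_2(x))$, one sees that the $m$ roots of $h_{t_0+\epsilon}$ near $x_0$ satisfy $(x-x_0)^m \approx -\epsilon (f_1(x_0)-f_2(x_0))/q(x_0)$ to leading order. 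If $m \geq 2$, the equation $y^m = \eta$ has fewer than $m$ real solutions for at least one sign of small $\eta$, producing complex roots of $h_{t_0+\epsilon}$ and contradicting the real-rootedness of every $h_t$; thus $m = 1$, so $p(t)$ jumps by at most one across $t_0$ and $|p(0)-p(1)| \leq 1$. But $p(0) \geq j+1$ (the first $j+1$ roots of $f_2$ lie strictly below $x_0$) and $p(1) \leq j-1$, so $p(0)-p(1) \geq 2$, a contradiction.

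The main technical hurdle is the local multiplicity analysis at $t_0$: ruling out $m \geq 2$ requires combining the leading-order perturbation expansion with the real-rootedness hypothesis at values of $t$ \emph{near} $t_0$, not just at $t_0$ itself. The combinatorial reduction to $k=2$, the continuity of roots of a real-rooted family, and the sign bookkeeping are all standard.
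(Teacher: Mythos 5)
The paper does not supply its own proof of this lemma; it simply cites it as Theorem~2.1 of Dedieu, Theorem~$2'$ of Fell, and Theorem~3.6 of Chudnovsky--Seymour, so there is no in-paper argument to compare against. Judged on its own, your proof is correct and follows the standard route taken in those references. The forward direction via the alternating-sign evaluation of every $f_i$ at the interlacer's roots is exactly the classical argument. For the reverse direction, your reduction to $k=2$ via the pairwise criterion $\beta_{i,j}\le\beta_{i',j+1}$ is valid (the common interlacer $\alpha_j=\max_i\beta_{i,j}$ witnesses the converse of that criterion), and convex combinations of a pair are a special case of convex combinations of the family, so the reduction is sound. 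The root-counting argument with $p(t)$ is then a clean way to finish. Two places could use a bit more care, though neither is a genuine gap. In the forward direction, the ``harmless perturbation'' of the $\alpha_j$'s can fail to be available when the interlacer is forced (e.g.\ $\max_i\beta_{i,j}=\min_i\beta_{i,j+1}$); the cleanest fix is to first prove the statement when all roots and interlacer points are distinct and then use that the real-rooted polynomials of degree $n$ with positive leading coefficient form a closed set in coefficient space. In the reverse direction, the assertion that the $m$ roots of $h_{t_0+\epsilon}$ near $x_0$ behave to leading order like $m$-th roots of $-\epsilon(f_1(x_0)-f_2(x_0))/q(x_0)$ is correct but should be justified (Rouch\'e's theorem gives exactly $m$ roots in a small disc, and the Puiseux/Newton-polygon expansion gives the leading-order scaling); one should also note explicitly that $t_0\in(0,1)$ since $x_0$ was chosen to avoid the roots of $f_1$ and $f_2$, so $t_0\pm\epsilon$ stays in $[0,1]$ and the real-rootedness hypothesis applies.
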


\section{The main result}\label{sec:main}

Our proof that the polynomials $\setof{f_{s}}_{s \in \setof{\pm 1}^{m}}$ form an interlacing family
  relies on the following generalization of the fact that the 
  matching polynomial is real-rooted.
It amounts to saying that if we pick each sign independently
  with any probabilities,
  then the resulting polynomial is still real-rooted.

\begin{theorem}\label{thm:skewMatchingPolynomial}
Let $p_{1}, \dotsc , p_{m}$ be numbers in $[0,1]$.
Then, the following polynomial is real-rooted
\[
  \sum_{s \in \setof{\pm 1}^{m}} 
  \left(\prod_{i : s_{i} = 1} p_{i} \right)
  \left(\prod_{i : s_{i} = -1} (1 - p_{i}) \right) 
  f_{s} (x).
\]
\end{theorem}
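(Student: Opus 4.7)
The plan is to prove the theorem by induction on $m$, the number of edges, using Lemma~\ref{lem:Fisk} to convert the desired real-rootedness into a common-interlacing statement. The base case $m = 0$ is trivial, since the polynomial is $\det(xI) = x^n$. For the inductive step, fix an edge $e_m = (u,v)$ and observe that the polynomial in the theorem statement is a convex combination
\[
F_p(x) \;=\; p_m\,F^{+}(x)\;+\;(1-p_m)\,F^{-}(x),
\]
where $F^{\pm}(x)$ denotes the corresponding sum with $s_m$ fixed at $\pm 1$ and the averaging carried out only over $s_1,\dots,s_{m-1}$. By the inductive hypothesis (applied after absorbing the fixed sign of $e_m$ into a base symmetric matrix), both $F^{+}$ and $F^{-}$ are real-rooted, and by Lemma~\ref{lem:Fisk}, $F_p$ is real-rooted for every $p_m \in [0,1]$ if and only if $F^{+}$ and $F^{-}$ admit a common interlacer.

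To exhibit this common interlacer I would exploit the rank-two structure of the edge matrix $E_m$. Setting $w = e_u + e_v$ and $z = e_u - e_v$, one has $2E_m = ww^{\top} - zz^{\top}$ with $w \perp z$. For each fixed signing $\sigma$ of the remaining edges, the matrices $A_{\sigma,+1}$ and $A_{\sigma,-1}$ can both be written as rank-one subtractions from the common auxiliary matrix
\[
C_\sigma \;:=\; A_{\sigma,-1}+ww^{\top} \;=\; A_{\sigma,+1}+zz^{\top} \;=\; A_\sigma + e_ue_u^{\top}+e_ve_v^{\top},
\]
and two applications of Cauchy's interlacing theorem then imply that the $n-1$ smallest eigenvalues of $C_\sigma$ form a common interlacer of $f_{\sigma,+1}$ and $f_{\sigma,-1}$.

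To lift this per-signing common interlacer to an interlacer of the \emph{expected} polynomials $F^{+}$ and $F^{-}$, I would strengthen the inductive hypothesis: for every real symmetric matrix $D$, the polynomial $\expec{s}{\det(xI - D - A_s)}$ is real-rooted whenever the $s_i$ are independent Bernoulli with parameters in $[0,1]$. Applying this strengthened form with $D = e_u e_u^{\top} + e_v e_v^{\top}$ shows that $\expec{\sigma}{\det(xI - C_\sigma)}$ is itself a real-rooted polynomial, producing a candidate interlacer for $F^{+}$ and $F^{-}$ after its largest root is stripped off.

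The main obstacle is this last step: establishing that the pointwise Cauchy-interlacing relationship between $\det(xI - C_\sigma)$ and $\det(xI - A_{\sigma,\pm 1})$ survives the expectation over $\sigma$. Averaging does not preserve common interlacing in general, so the argument must exploit the specific rank-one structure linking $C_\sigma$ to each of $A_{\sigma,\pm 1}$ together with the fact that all three expected characteristic polynomials are covered by the strengthened inductive hypothesis. I expect this to require a careful tracking of the candidate interlacer alongside the main inductive statement, and it is where the delicate technical work of the proof will concentrate.
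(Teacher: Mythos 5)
Your proposal correctly identifies the overall architecture — reduce to common interlacing via Lemma~\ref{lem:Fisk} and decompose by freezing the last edge sign — and the per-signing algebra is correct: with $w = e_u + e_v$, $z = e_u - e_v$ one has $C_\sigma = A_{\sigma,-1} + ww^{\top} = A_{\sigma,+1} + zz^{\top}$, and Cauchy interlacing gives a common interlacer of $f_{\sigma,+1}$ and $f_{\sigma,-1}$ for each fixed $\sigma$. You also correctly realize that a strengthened hypothesis (an extra PSD summand $D$) is needed for the inductive step to even be stated. But the step you flag as "delicate" is in fact a fatal gap, not a technicality. The map you propose — take $\E_\sigma[\chi_{C_\sigma}]$, strip its top root, and hope it interlaces both $F^+ = \E_\sigma[f_{\sigma,+1}]$ and $F^- = \E_\sigma[f_{\sigma,-1}]$ — fails because averaging simply does not preserve the interlacing relation. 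If $g_i$ interlaces $f_i$ for $i=1,2$, it need not be the case that $g_1 + g_2$ interlaces $f_1 + f_2$; the sum $f_1 + f_2$ need not even be real-rooted. The fact that the perturbations $zz^{\top}$ and $ww^{\top}$ are fixed independent of $\sigma$ is suggestive, but it does not on its own force the pointwise interlacers to cohere after averaging, and you offer no mechanism by which it would. Moreover the only generic tool you have for certifying common interlacing of $F^+$ and $F^-$ is Lemma~\ref{lem:Fisk} itself, which requires knowing that $\lambda F^+ + (1-\lambda) F^-$ is real-rooted for every $\lambda$ — but that convex combination is precisely $F_p$ with $p_m = \lambda$, so unless you actually exhibit the interlacer explicitly (which you cannot, for the reason above), the argument is circular.

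The paper avoids this entirely. Rather than constructing an interlacer, it proves the real-rootedness of the full convex combination $F_p$ directly, using the Borcea--Br\"and\'en theory of real stable polynomials. Concretely: $dI - A_s$ is decomposed as a PSD diagonal correction plus a sum of rank-one signed Laplacian terms $a_{u,v}a_{u,v}^{\top}$ or $b_{u,v}b_{u,v}^{\top}$ with $a_{u,v} = e_u - e_v$ and $b_{u,v} = e_u + e_v$; the multivariate determinant $\mydet{xI + D + \sum_i u_i a_i a_i^{\top} + \sum_i v_i b_i b_i^{\top}}$ is real stable by Lemma~\ref{matrixPencils}; the operators $T_i = 1 + p_i \partial_{u_i} + (1-p_i)\partial_{v_i}$ preserve real stability (Corollary~\ref{cor:Tpreserves}, a special case of Theorem~\ref{Weyl}); and Lemma~\ref{lem:Toper} identifies the result of applying $Z_{u_i}Z_{v_i}T_i$ with the desired two-term average. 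Applying all $m$ operators and restricting to one variable gives a univariate real stable, hence real-rooted, polynomial, which is exactly your $F_p$. The common interlacing of $F^+$ and $F^-$ is then a corollary of this real-rootedness (read through Lemma~\ref{lem:Fisk}), not an ingredient in proving it — which is the inversion your proposal is missing.
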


We will prove this theorem using machinery that we develop in Section~\ref{sec:real_stability}. 
It immediately implies our main technical result as follows.

\begin{theorem}\label{thm:interlacingFamily}
The polynomials $\setof{f_{s}}_{s \in \setof{\pm 1}^{m}}$
  are an interlacing family.
\end{theorem}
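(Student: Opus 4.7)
The plan is to reduce the claim that $\{f_s\}_{s\in\{\pm 1\}^m}$ is an interlacing family to Theorem~\ref{thm:skewMatchingPolynomial}, by way of Lemma~\ref{lem:Fisk}. Unpacking Definition~\ref{def:family} with $S_i=\{\pm 1\}$ for each $i$, I need to show that for every $k\in\{0,\dots,m-1\}$ and every partial assignment $s_1,\dots,s_k\in\{\pm 1\}^k$, the two polynomials $f_{s_1,\dots,s_k,+1}$ and $f_{s_1,\dots,s_k,-1}$ have a common interlacing. Each $f_s$ is the monic degree-$n$ characteristic polynomial of $xI-A_s$, so the sum $f_{s_1,\dots,s_k,t}=\sum_{s_{k+2},\dots,s_m}f_{s_1,\dots,s_k,t,s_{k+2},\dots,s_m}$ has degree $n$ and positive leading coefficient $2^{m-k-1}$; the degree and leading-coefficient hypotheses of Lemma~\ref{lem:Fisk} are therefore automatic.

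By Lemma~\ref{lem:Fisk}, it suffices to prove that for every $\lambda\in[0,1]$ the convex combination
\[
  F_\lambda(x)\;\defeq\;\lambda\,f_{s_1,\dots,s_k,+1}(x)\;+\;(1-\lambda)\,f_{s_1,\dots,s_k,-1}(x)
\]
is real-rooted. Here is where Theorem~\ref{thm:skewMatchingPolynomial} enters: I will choose probabilities $p_1,\dots,p_m\in[0,1]$ so that the polynomial produced by that theorem is (up to a positive scalar) exactly $F_\lambda$. Specifically, set
\[
  p_i=\begin{cases} 1 & \text{if } i\le k \text{ and } s_i=+1,\\ 0 & \text{if } i\le k \text{ and } s_i=-1,\\ \lambda & \text{if } i=k+1,\\ 1/2 & \text{if } i>k+1.\end{cases}
\]
The $p_i\in\{0,1\}$ for $i\le k$ kill every term in the sum of Theorem~\ref{thm:skewMatchingPolynomial} whose sign pattern disagrees with the prescribed $s_1,\dots,s_k$; the choice $p_i=1/2$ for $i>k+1$ contributes a uniform factor $(1/2)^{m-k-1}$ to every surviving term; and the choice $p_{k+1}=\lambda$ weights the two remaining branches by $\lambda$ and $1-\lambda$. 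A direct computation gives
\[
  \sum_{s\in\{\pm 1\}^m}\Bigl(\prod_{i:s_i=1}p_i\Bigr)\Bigl(\prod_{i:s_i=-1}(1-p_i)\Bigr)f_s(x)\;=\;(1/2)^{m-k-1}F_\lambda(x),
\]
so Theorem~\ref{thm:skewMatchingPolynomial} yields that $F_\lambda$ is real-rooted, completing the verification.

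I do not anticipate any serious obstacle here, because Theorem~\ref{thm:skewMatchingPolynomial} was designed to do exactly this work; the real content of the paper is in proving Theorem~\ref{thm:skewMatchingPolynomial} via real-stability machinery. The only thing to be careful about is correctly handling the boundary cases ($k=0$, where there is no prescribed partial assignment, and $k=m-1$, where no $p_i$ is set to $1/2$), both of which are handled uniformly by the formula above with the convention that empty products equal one.
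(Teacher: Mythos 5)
Your proof is correct and follows essentially the same route as the paper: reduce via Lemma~\ref{lem:Fisk} to real-rootedness of the convex combination, then instantiate Theorem~\ref{thm:skewMatchingPolynomial} with $p_{k+1}=\lambda$, $p_i=1/2$ for $i>k+1$, and $p_i\in\{0,1\}$ forcing the prescribed signs for $i\le k$ (the paper writes this last choice compactly as $p_i=(1+s_i)/2$, which is the same thing). The only cosmetic difference is that you explicitly track the harmless $(1/2)^{m-k-1}$ scalar, which the paper leaves implicit.
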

\begin{proof}
We will show that for every
  $0 \leq k \leq m-1$,
  every partial assignment $s_{1} \in \pm 1, \dotsc , s_{k} \in \pm 1$,
  and every $\lambda \in [0,1]$,
 the polynomial
\[
  \lambda f_{s_{1}, \dotsc , s_{k}, 1} (x) + 
  (1-\lambda ) f_{s_{1}, \dotsc , s_{k}, -1} (x)
\]
is real-rooted.
The theorem will then follow from Lemma~\ref{lem:Fisk}.

To show that the above polynomial is real-rooted, we
  apply Theorem~\ref{thm:skewMatchingPolynomial} 
  with $p_{k+1} = \lambda$, 
  $p_{k+2}, \dotsc , p_{m} = 1/2$,
  and $p_{i} = (1+s_{i})/2$ for $1 \leq i \leq k$.
\end{proof}

\begin{theorem}\label{thm:goodLift}
Let $G$ be a graph with adjacency matrix $A$ and universal cover $T$. Then there
is a signing $s$ of $A$ so that all of the eigenvalues of $A_s$ are at most
$\rho(T)$. 
In particular, if $G$ is $d$-regular, there is a signing $s$ so that the eigenvalues of $A_s$ are at most
$2\sqrt{d-1}$.
\end{theorem}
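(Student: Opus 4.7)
The plan is simply to assemble the machinery already developed in the paper. First I would invoke Theorem~\ref{thm:interlacingFamily} to assert that the collection $\{f_s\}_{s \in \{\pm 1\}^m}$ of characteristic polynomials of signed adjacency matrices of $G$ is an interlacing family. Feeding this into Theorem~\ref{thm:interlacing} then yields a specific signing $s^\star \in \{\pm 1\}^m$ such that the largest root of $f_{s^\star}(x)$ is at most the largest root of the aggregate polynomial $f_\emptyset(x) = \sum_s f_s(x)$.

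Next I would identify $f_\emptyset$ with a scalar multiple of the matching polynomial. By the Godsil--Gutman identity (Theorem~\ref{thm:matching}),
\[
f_\emptyset(x) \;=\; \sum_{s \in \{\pm 1\}^m} f_s(x) \;=\; 2^m \, \expec{s \in \{\pm 1\}^m}{f_s(x)} \;=\; 2^m \, \mu_G(x),
\]
so $f_\emptyset$ and $\mu_G$ share the same roots. Lemma~\ref{lem:coverBound} bounds every root of $\mu_G$ in absolute value by $\rho(T)$, and in particular its largest root is at most $\rho(T)$. Chaining these two inequalities, every eigenvalue of $A_{s^\star}$, being a root of $f_{s^\star}$, is at most $\rho(T)$, which is the first claim.

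For the $d$-regular specialization, I would note that the universal cover of a $d$-regular graph is the infinite $d$-regular tree, whose spectral radius is $2\sqrt{d-1}$; alternatively one can bypass $\rho(T)$ entirely and invoke Theorem~\ref{thm:matchingBound} of Heilmann and Lieb to bound the roots of $\mu_G$ directly by $2\sqrt{d-1}$, giving the same conclusion. There is essentially no obstacle at this stage: all the nontrivial content has been deferred to Theorem~\ref{thm:interlacingFamily} (and hence to the still-unproved Theorem~\ref{thm:skewMatchingPolynomial}). The only subtlety worth flagging is that Theorem~\ref{thm:interlacing} controls only the \emph{largest} root of $f_{s^\star}$ rather than the absolute values of its roots, which is why the Ramanujan consequences pulled out in the introduction must go through bipartite graphs, where the spectrum of $A_{s^\star}$ is symmetric about zero and a one-sided bound is automatically two-sided.
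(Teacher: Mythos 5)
Your proof is correct and follows the paper's argument exactly: chain Theorem~\ref{thm:interlacingFamily} into Theorem~\ref{thm:interlacing} to extract a good signing, identify $f_\emptyset$ with (a scalar multiple of) $\mu_G$ via Theorem~\ref{thm:matching}, and bound the roots of $\mu_G$ by $\rho(T)$ using Lemma~\ref{lem:coverBound}, with Theorem~\ref{thm:matchingBound} as the alternate route in the $d$-regular case. You spell out the Godsil--Gutman step that the paper's proof leaves implicit, and your closing remark about the one-sided nature of the bound is a correct and worthwhile observation.
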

\begin{proof}
The first statement follows immediately from Theorems
 \ref{thm:interlacing} and \ref{thm:interlacingFamily} and Lemma~\ref{lem:coverBound}.
The second statement follows by noting that the universal cover of a $d$-regular
 graph is the infinite $d$-regular tree, which has spectral radius at most $2\sqrt{d-1}$,
 or by directly appealing to Theorem~\ref{thm:matchingBound}.
\end{proof}

\begin{lemma}\label{lem:completeBipartite}
Every non-trivial eigenvalue of a complete $(c,d)$-biregular graph
  is zero.
\end{lemma}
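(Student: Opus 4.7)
The plan is to diagonalize the adjacency matrix of the complete $(c,d)$-biregular graph directly. Denote this graph by $G$, and label its two sides $L$ and $R$ with $|L|=d$ and $|R|=c$, so that every vertex in $L$ has degree $c$ and every vertex in $R$ has degree $d$. Writing the adjacency matrix in block form with respect to the bipartition gives
\[
A \;=\; \begin{pmatrix} 0 & J \\ J^{\top} & 0 \end{pmatrix},
\]
where $J$ is the $d \times c$ all-ones matrix. The key observation is that $J = \mathbf{1}_d \mathbf{1}_c^{\top}$ is a rank-one matrix, so $A$ has rank at most $2$. Hence $0$ is an eigenvalue of $A$ with multiplicity at least $(c+d)-2$.

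Next I would exhibit explicit eigenvectors for the two trivial eigenvalues $\pm \sqrt{cd}$. Consider the pair of vectors
\[
v_{\pm} \;=\; \begin{pmatrix} \sqrt{c}\,\mathbf{1}_d \\ \pm\sqrt{d}\,\mathbf{1}_c \end{pmatrix},
\]
and compute directly: $J(\pm\sqrt{d}\,\mathbf{1}_c) = \pm\sqrt{d}\,c\,\mathbf{1}_d = \pm\sqrt{cd}\cdot\sqrt{c}\,\mathbf{1}_d$, and similarly $J^{\top}(\sqrt{c}\,\mathbf{1}_d) = \sqrt{c}\,d\,\mathbf{1}_c = \sqrt{cd}\cdot\sqrt{d}\,\mathbf{1}_c$. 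Putting these together yields $A v_{\pm} = \pm \sqrt{cd}\, v_{\pm}$. Thus the two trivial eigenvalues $\pm\sqrt{cd}$ each have multiplicity at least one, and together with the $\ge (c+d-2)$ zero eigenvalues this accounts for all $c+d$ eigenvalues by a dimension count. Consequently, every eigenvalue other than $\pm\sqrt{cd}$ equals zero, which is precisely the claim.

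There is no real obstacle here: the statement is a direct linear-algebra computation exploiting the rank-one structure of the block $J$. The only thing to be mindful of is the convention for which side of the bipartition has size $c$ versus $d$, but this choice does not affect the argument since $A$ and $A^{\top}$ share the same spectrum.
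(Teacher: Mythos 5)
Your proof is correct and uses the same central idea as the paper's one-line argument: the adjacency matrix has rank~$2$, so all eigenvalues other than $\pm\sqrt{cd}$ vanish. You simply spell out the rank bound via the factorization $J=\mathbf{1}_d\mathbf{1}_c^{\top}$ and exhibit explicit eigenvectors for the two trivial eigenvalues, which the paper leaves implicit.
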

\begin{proof}
The adjacency matrix of this graph has rank $2$, so all its
  eigenvalues other than $\pm \sqrt{cd}$
  must be zero.
\end{proof}

\begin{theorem}\label{thm:Ramanujan}
For every $d \geq 3$ there is an infinite sequence of 
  $d$-regular bipartite Ramanujan graphs.
\end{theorem}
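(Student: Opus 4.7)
The plan is to build the infinite family inductively, starting from a base graph and repeatedly applying the 2-lift guaranteed by Theorem~\ref{thm:goodLift}. I fix the degree $d\ge 3$ and take as base case the complete bipartite graph $K_{d,d}$, which is clearly $d$-regular and bipartite. Its adjacency matrix has rank $2$, so by Lemma~\ref{lem:completeBipartite} its only nonzero eigenvalues are the trivial ones $\pm d = \pm\sqrt{d\cdot d}$; in particular every nontrivial eigenvalue is $0$, well within the Ramanujan bound $\pm 2\sqrt{d-1}$.

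For the inductive step, suppose $G$ is a $d$-regular bipartite Ramanujan graph on $n$ vertices. By Theorem~\ref{thm:goodLift} there is a signing $s$ of its adjacency matrix $A$ so that every eigenvalue of $A_s$ is at most $2\sqrt{d-1}$. Let $\widetilde G$ denote the corresponding 2-lift. Three structural facts are immediate: $\widetilde G$ has $2n$ vertices, is still $d$-regular (every vertex in a fibre has exactly one neighbour in each fibre corresponding to an original neighbour, whether the sign is $+1$ or $-1$), and is still bipartite (if $V=V_0\sqcup V_1$ is the bipartition of $G$, then the union of the fibres over $V_0$ and over $V_1$ gives a bipartition of $\widetilde G$, since both edge patterns \eqref{posedge} and \eqref{negedge} preserve the bipartition).

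Next I would assemble the spectral bounds. By the Bilu–Linial identity \cite[Lemma 3.1]{BiluLinial} recalled in Section~\ref{intro:lifts}, the multiset of eigenvalues of $\widetilde G$ is the union of those of $A$ and those of $A_s$. The eigenvalues of $A$ are, by the inductive hypothesis, either trivial ($\pm d$, which remain trivial for $\widetilde G$) or lie in $[-2\sqrt{d-1},2\sqrt{d-1}]$. For the eigenvalues of $A_s$, Theorem~\ref{thm:goodLift} gives the upper bound $2\sqrt{d-1}$; the matching lower bound $-2\sqrt{d-1}$ comes from the fact that $G$ is bipartite, so after suitable reordering
\[
A_s=\begin{pmatrix} 0 & B_s \\ B_s^{T} & 0 \end{pmatrix}
\]
for some matrix $B_s$, and hence the spectrum of $A_s$ is symmetric about $0$ (if $(x,y)$ is an eigenvector with eigenvalue $\lambda$, then $(x,-y)$ is an eigenvector with eigenvalue $-\lambda$). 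Thus every nontrivial eigenvalue of $\widetilde G$ lies in $[-2\sqrt{d-1},2\sqrt{d-1}]$, so $\widetilde G$ is Ramanujan.

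Iterating this construction starting from $K_{d,d}$ yields an infinite sequence of $d$-regular bipartite Ramanujan graphs whose vertex counts double at each step, proving the theorem. There is no real obstacle here: every nontrivial ingredient (a good signing always exists, lifts of bipartite graphs are bipartite, bipartite spectra are symmetric about zero) has already been established, so the argument is essentially a bookkeeping of the inductive hypothesis under 2-lifts.
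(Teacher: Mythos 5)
Your proposal is correct and follows essentially the same route as the paper's proof: start from the complete bipartite graph $K_{d,d}$, use Lemma~\ref{lem:completeBipartite} for the base case, and iterate the 2-lift guaranteed by Theorem~\ref{thm:goodLift}, invoking the Bilu--Linial spectral decomposition and bipartite symmetry to get the two-sided Ramanujan bound. The only (cosmetic) difference is where you apply the symmetry argument: you observe directly that $A_s$ retains the bipartite block structure and hence has spectrum symmetric about $0$, whereas the paper observes that the resulting 2-lift is itself bipartite and so its full spectrum is symmetric; both yield the same lower bound $-2\sqrt{d-1}$ on the new eigenvalues.
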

\begin{proof}
We know from Lemma~\ref{lem:completeBipartite} that the
   complete bipartite graph of degree $d$ is Ramanujan.
By Lemma 3.1 of \cite{BiluLinial}
  and
 Theorem~\ref{thm:goodLift},
  for every $d$-regular bipartite Ramanujan graph $G$,
  there is a 2-lift 
  in which every non-trivial eigenvalue
  is at most $2 \sqrt{d-1}$.
As the
  2-lift of a bipartite graph is bipartite,
  and the eigenvalues of a bipartite graph
  are symmetric about $0$,
  this 2-lift is also a regular bipartite Ramanujan graph.

Thus, for every $d$-regular bipartite Ramanujan graph $G$,
  there is another $d$-regular bipartite Ramanujan graph
  with twice as many vertices.
\end{proof}

\begin{theorem}\label{thm:Irreg}
For every $c,d\ge 3$, there is an infinite sequence of $(c,d)$-biregular
  bipartite Ramanujan graphs.
\end{theorem}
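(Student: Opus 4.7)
The plan is to prove Theorem~\ref{thm:Irreg} by mimicking the inductive argument used for Theorem~\ref{thm:Ramanujan}, but substituting the general universal-cover bound from Theorem~\ref{thm:goodLift} for the direct appeal to Theorem~\ref{thm:matchingBound}. The base case will be the complete $(c,d)$-biregular bipartite graph $K_{c,d}$, which by Lemma~\ref{lem:completeBipartite} has only the trivial eigenvalues $\pm\sqrt{cd}$ together with zeros, and so is trivially Ramanujan. For the inductive step, I assume $G$ is a $(c,d)$-biregular bipartite Ramanujan graph and produce a 2-lift $\widetilde{G}$ that is also a $(c,d)$-biregular bipartite Ramanujan graph; since a 2-lift doubles the vertex count, iterating this gives an infinite family.

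To carry out the inductive step, I first observe that a 2-lift preserves the degree sequence (every vertex in the fibre has the same degree as its preimage) and preserves bipartiteness (the bipartition lifts), so $\widetilde{G}$ is automatically $(c,d)$-biregular bipartite for any choice of signing. Next, the universal cover of $G$ is the infinite $(c,d)$-biregular tree $T_{c,d}$, whose spectral radius equals $\sqrt{c-1}+\sqrt{d-1}$ (this fact is recorded in the discussion preceding Section~\ref{sec:related}). Applying Theorem~\ref{thm:goodLift} to $G$ produces a signing $s$ such that every eigenvalue of the signed adjacency matrix $A_s$ is at most $\sqrt{c-1}+\sqrt{d-1}$. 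By Bilu--Linial's Lemma~3.1, the eigenvalues of the corresponding 2-lift $\widetilde{G}$ are the union, with multiplicity, of the eigenvalues of $A$ (the old eigenvalues) and those of $A_s$ (the new eigenvalues).

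The key additional observation, exactly as in the proof of Theorem~\ref{thm:Ramanujan}, is that since $\widetilde{G}$ is bipartite its spectrum is symmetric about $0$, and since the old eigenvalues (being the spectrum of the bipartite graph $G$) are also symmetric about $0$, the new eigenvalues must likewise be symmetric about $0$. Consequently, the upper bound $\sqrt{c-1}+\sqrt{d-1}$ on the new eigenvalues yields a matching lower bound $-(\sqrt{c-1}+\sqrt{d-1})$. The only thing left to check is that the extreme eigenvalues $\pm\sqrt{cd}$ of $\widetilde{G}$ still come from the old eigenvalues rather than the new ones, so that all \emph{non-trivial} eigenvalues of $\widetilde{G}$ are bounded in absolute value by $\sqrt{c-1}+\sqrt{d-1}$. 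This reduces to verifying the elementary inequality $\sqrt{c-1}+\sqrt{d-1}<\sqrt{cd}$ for $c,d\ge 3$, which follows from expanding both sides and using $(c-1)(d-1)+1>2\sqrt{(c-1)(d-1)}$ (strict for $(c-1)(d-1)>1$).

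The main potential obstacle is purely bookkeeping: making sure that the universal cover used in Theorem~\ref{thm:goodLift} really is $T_{c,d}$ at every stage of the induction, which is automatic because $G$ and each of its 2-lifts have the same $(c,d)$-biregular degree sequence and therefore the same universal cover. There is no new spectral or combinatorial content beyond what is already assembled in Theorem~\ref{thm:goodLift}, Lemma~\ref{lem:completeBipartite}, and Bilu--Linial's 2-lift spectral identity; the proof is essentially a transcription of the proof of Theorem~\ref{thm:Ramanujan} with $2\sqrt{d-1}$ replaced by $\sqrt{c-1}+\sqrt{d-1}$ and $K_{d,d}$ (or the $d$-regular complete bipartite graph) replaced by $K_{c,d}$.
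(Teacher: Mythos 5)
Your proposal is correct and follows essentially the same route as the paper: base the induction at the complete $(c,d)$-biregular bipartite graph, apply Theorem~\ref{thm:goodLift} using that the universal cover is the infinite $(c,d)$-biregular tree with spectral radius $\sqrt{c-1}+\sqrt{d-1}$, and use bipartite symmetry of the spectrum to turn the one-sided bound into a two-sided one. The one small addition you make — verifying $\sqrt{c-1}+\sqrt{d-1}<\sqrt{cd}$ for $c,d\ge 3$, so the trivial eigenvalues of the lift must be old eigenvalues — is a detail the paper leaves implicit; it is a correct and welcome bit of bookkeeping, but not a different argument.
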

\begin{proof}
We know from Lemma~\ref{lem:completeBipartite} that the
   complete $(c,d)$-biregular is Ramanujan.
We will use this as a base for a construction of an infinite sequence of $(c,d)$-biregular bipartite Ramanujan graphs.
Let $G$ be any $(c,d)$-biregular bipartite Ramanujan graph.
As mentioned in Section \ref{intro:irreg}, the universal cover of $G$ is the infinite $(c,d)$-biregular tree,
  which has spectral radius $\sqrt{c-1}+\sqrt{d-1}$.
Thus, Theorem \ref{thm:goodLift} tells us that there is a 2-lift of $G$ with all new
  eigenvalues at most $\sqrt{c-1}+\sqrt{d-1}$.
As this graph is bipartite, all of its non-trivial eigenvalues
  have absolute value at most $\sqrt{c-1}+\sqrt{d-1}$.
So, the resulting $2$-lift is a larger $(c,d)$-biregular bipartite
  Ramanujan graph.
\end{proof}

To conclude the section, we remark that repeated application of Theorem~\ref{thm:goodLift} can be used to 
  generate an infinite sequence of irregular Ramanujan graphs from {\em any} finite irregular bipartite Ramanujan graph, since
  all of the lifts produced will have 
  %(by definition, since they are connected) % they aren't necessarily connected -- the all + one is not
  the same universal cover.
In contrast, Lubotzky and Nagnibeda~\cite{lubotzky1998not} have shown that there exist infinite trees that cover infinitely many finite graphs but
  such that none of the finite graphs are Ramanujan.

\section{Real stable polynomials} \label{sec:real_stability}

In this section we will establish the real-rootedness of a class of polynomials which
  includes the polynomials of Theorem~\ref{thm:skewMatchingPolynomial}.
We will do this by considering a multivariate
generalization of real-rootedness called {\em real stability} (see, e.g., the surveys
\cite{pemantle, wagner}).
In particular, we will show that the univariate polynomials we are interested in are the images,
under a well-behaved linear transformation, of a multivariate real stable polynomial.
%and then appeal to results in the field of real stable polynomials to argue the real-rootedness of the original class.

\begin{definition}
A multivariate polynomial $f \in \R[z_1, \dots, z_n]$ is called {\em real stable} if it is the zero polynomial
 or if
\[f(z_1, \dots, z_n) \neq 0\]
whenever the imaginary part of every $z_{i}$ is strictly positive.
\end{definition}
Note that a real stable polynomial has real coefficients, but may be evaluated on
complex inputs.

We begin by considering certain determinantal polynomials whose real stability is guaranteed
  by the following lemma, which may be found in 
  Borcea and Br\"{a}nd\'{e}n \cite[Proposition 2.4]{borcea2008applications}.
\begin{lemma}\label{matrixPencils}
Let $A_1, \dots, A_m$ be positive semidefinite matrices.  Then
\[
\mydet{z_1 A_1 + \dots + z_m A_m}
\]
is real stable.
\end{lemma}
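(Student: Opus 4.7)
The plan is to reduce real stability to a statement about the eigenvalues of a real symmetric matrix, via a symmetric factorization. Write $z_j = a_j + i b_j$ with $b_j > 0$, and set
\[
  C := \sum_{j=1}^m a_j A_j \AND B := \sum_{j=1}^m b_j A_j,
\]
so that $\sum z_j A_j = C + iB$. Both $C$ and $B$ are real symmetric, and $B$ is a positive combination of positive semidefinite matrices, hence is itself PSD. The polynomial obviously has real coefficients, so to establish real stability it suffices to show $\det(C + iB) \neq 0$ whenever $b_1,\dots,b_m>0$, unless the polynomial is identically zero.

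First I would dispose of the degenerate case. If there exists a nonzero vector $v$ with $A_j v = 0$ for every $j$, then $(\sum z_j A_j)v = 0$ for all choices of the $z_j$, so $\det(\sum z_j A_j)$ is the zero polynomial, which is real stable by convention. Otherwise, I claim $B$ is actually positive definite for every choice of $b_j > 0$: if $v^T B v = 0$, then $\sum b_j v^T A_j v = 0$, and since each term is nonnegative and $b_j > 0$, we must have $v^T A_j v = 0$ for every $j$. The standard fact that $v^T A_j v = 0$ implies $A_j v = 0$ for PSD $A_j$ (take $A_j^{1/2}$) then forces $A_j v = 0$ for all $j$, so $v = 0$.

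Now assume $B$ is positive definite, and factor
\[
  C + iB = B^{1/2}\bigl(B^{-1/2} C B^{-1/2} + iI\bigr) B^{1/2}.
\]
The matrix $M := B^{-1/2} C B^{-1/2}$ is real symmetric, so its eigenvalues are real. Consequently the eigenvalues of $M + iI$ are all of the form $\lambda + i$ with $\lambda \in \mathbb{R}$, hence nonzero. Therefore $M + iI$ is invertible, and so
\[
  \det(C + iB) = \det(B)\cdot \det(M + iI) \neq 0,
\]
as desired.

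The only place requiring any care is the possibility that $B$ fails to be positive definite, which would break the symmetric factorization above; I expect this to be the main obstacle, but it is handled cleanly by observing that failure of positive definiteness of $B$ is equivalent to the existence of a common null vector for $A_1,\dots,A_m$, which in turn forces $\det(z_1 A_1 + \cdots + z_m A_m)$ to vanish identically and so places us in the (trivially stable) zero-polynomial case.
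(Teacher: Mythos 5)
Your proof is correct. The paper does not give its own argument for this lemma---it cites Borcea and Br\"{a}nd\'{e}n, Proposition~2.4---and your approach (writing $\sum_j z_j A_j = C + iB$, disposing of the degenerate common-kernel case where the determinant vanishes identically, then factoring $C+iB = B^{1/2}(B^{-1/2}CB^{-1/2}+iI)B^{1/2}$ and using that a real symmetric matrix has real spectrum) is precisely the standard proof from that reference, with all the details, including the equivalence between failure of positive definiteness of $B$ and the existence of a common null vector, handled correctly.
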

%
% Dan: I cut this because it is not obvious from the definition of stability that we give here.
% One needs the other definition.
%
%\adam{Added some stuff here} In the case that the $A_i$ are positive definite, the lemma easily follows from the observations that such matrices have invertible square roots and that their characteristic polynomials are real-rooted.
%The extension to positive semidefiniteness requires some extra continuity arguments, and so we refer the interested reader to \cite[Proposition 2.4]{borcea2008applications}.

%We will denote the collection of all such polynomials on $n$ variables by $\H_n$.
%\begin{remark}
%Borcea and Br\"{a}nd\'{e}n (Lemma~2.1 in \cite{BBWeylAlgebra}) note that there is an equivalent characterization of the spaces $\H_i$.
%Firstly we can define $\H_{0}$ to be the constant functions and $\H_1$ to consist of all (univariate) real-rooted polynomials.
%Then we can define $\H_n$ to consist of all multivariate polynomials $f(x_1, \dots, x_n)$ such that
%\[
%f(\vec{w} + \vec{v}t) \in \H_1
%\]
%for all $\vec{w} \in \R^n$ and all $\vec{v} \in \R^n$ with $v_i > 0$ for all $i$ (the equivalence can be seen via the bijection between $z_k$ and $w_k + v_k i$).
%\end{remark}
Real stable polynomials enjoy a number of useful closure properties. 
In particular, it is easy to see that if $f(x_1, \dots, x_k)$ and $g(y_{1}, \dots y_j)$ are real stable then $f(x_1, \dots, x_k) g(y_1, \dots, y_j)$ is real stable.
A standard limiting argument based on Hurwitz's theorem shows that the real stability of $f(x_1, \dots, x_k)$ implies the
real stability of $f(x_1, \dots, x_{k-1}, c)$ for every  $c \in \R$ (see, e.g., Lemma~2.4 in \cite{wagner}).
For a variable $x_{i}$, we let $Z_{x_{i}}$ be the operator on polynomials induced by setting this variable to zero.

In \cite{BBWeylAlgebra}, Borcea and Br\"{a}nd\'{e}n characterize an entire class of differential operators that preserve real stability.
To simplify notation, we will let $\partial_{z_{i}}$ denote the operation of partial differentiation
  with respect to $z_{i}$.
For $\alpha , \beta \in \mathbb{N}^n$, we use the notation
\[
  z^{\alpha} = \prod_{i=1}^{n} z_{i}^{\alpha_i}
\quad
\text{and}
\quad 
  \partial^{\beta} = \prod_{i=1}^{n} \left(\partial_{z_i}\right)^{\beta_{i}}.
\]

\begin{theorem}[Theorem 1.3 in \cite{BBWeylAlgebra}] \label{Weyl}
Let $T : \R[z_1, \dots , z_n ] \to  \R[z_1, \dots , z_n ]$ be an operator of the form
\[
T = \sum_{\alpha , \beta \in \mathbb{N}^n} c_{\alpha, \beta} z^\alpha \partial^{\beta}
\]
where 
 $c_{\alpha, \beta} \in \mathbb{R}$ and $c_{\alpha ,\beta}$
  is zero for all but finitely many terms.
Define
\[ 
F_T(z,w) := \sum_{\alpha, \beta} c_{\alpha, \beta} z^{\alpha}w^{\beta}.
\]
Then $T$ preserves real stability if and only if $F_T(z, -w)$ is real stable.
\end{theorem}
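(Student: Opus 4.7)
The plan is to prove both directions of the equivalence by exploiting the fundamental identity
\[
T\bigl(e^{\langle w, z\rangle}\bigr) = F_T(z, w)\, e^{\langle w, z\rangle},
\]
which follows from $\partial_{z_i}^{\beta_i} e^{\langle w, z\rangle} = w_i^{\beta_i} e^{\langle w, z\rangle}$ applied term by term. Exponentials are not polynomials, so one cannot use this identity directly for stability; instead, I would work with stable polynomial approximants of the exponential (such as $\prod_i (1 + w_i z_i / N)^N$, or else $\prod_i (z_i + w_i)^N$ below) and pass to limits via Hurwitz's theorem, using the standard fact that a locally-uniform limit of real-stable polynomials is either identically zero or real stable.

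For the sufficiency direction (assume $F_T(z, -w)$ is real stable), I would first reduce to showing $T(f)$ is real stable for every real-stable $f$ of bounded multi-degree; this reduction uses truncation and Hurwitz. Next I would pass to the symmetric multi-affine polarization $\tilde{f}$ of $f$, which is real stable if and only if $f$ is, by the Grace--Walsh--Szeg\H{o} theorem. In the polarized picture, each $\partial_{z_i}^{\beta_i}$ becomes a sum of restriction-to-faces operators, and $T$ becomes a linear map between finite-dimensional spaces of multi-affine polynomials. I would then express $\tilde T(\tilde f)$ as a specialization of a product involving the symbol $F_T(z, -w)$ (viewing differentiation as a "polar dual" to multiplication, which is the source of the sign flip), and invoke the closure of real stability under products and under real affine specializations to conclude.

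For the necessity direction (assume $T$ preserves real stability), the test polynomial is $P_N(z, w) := \prod_{i=1}^n (z_i + w_i)^N$ with $N$ larger than every $\beta_i$ in the support of $T$. Each factor $z_i + w_i$ is real stable in $(z_i, w_i)$, since $z_i + w_i = 0$ forces $w_i = -z_i$ which is incompatible with both having positive imaginary parts; therefore $P_N$ is jointly real stable in $(z, w)$. A standard lemma (provable by approximating $w$-specializations and using Hurwitz in $w$) shows that the trivial extension of $T$ acting only on the $z$-variables preserves joint real stability in $(z, w)$, so $T(P_N)(z, w)$ is real stable. A direct calculation gives
\[
T(P_N)(z, w) = \sum_{\alpha, \beta} c_{\alpha, \beta}\, z^\alpha \prod_i \frac{N!}{(N-\beta_i)!}(z_i + w_i)^{N-\beta_i}.
\]
After the substitution $w_i \mapsto -w_i - N$, dividing by a nonvanishing common factor, and sending $N \to \infty$ with Hurwitz, the dominant contribution is $F_T(z, -w)$ multiplied by a nowhere-vanishing analytic factor on the upper half-plane, yielding the desired real stability of $F_T(z, -w)$.

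The hard part will be executing the rescaling-and-limit step in the necessity direction: one must arrange the substitution so that the common factor is nonvanishing on the relevant domain, so that the leading-order term is exactly $F_T(z, -w)$ (not a lower-order truncation or a degeneration), and so that Hurwitz applies on a connected open set. A secondary but nontrivial ingredient is the tensorization lemma that a stability preserver on $\mathbb{R}[z]$ extends to a preserver of joint stability on $\mathbb{R}[z, w]$; this itself rests on approximating joint stability by specializations to real $w$, for which one again needs Hurwitz. Once these two analytic ingredients are in place, the algebraic identities --- the action on exponentials and the explicit form of $T(P_N)$ --- will do the rest of the work.
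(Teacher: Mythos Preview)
The paper does not prove this theorem at all: it is quoted as Theorem~1.3 of Borcea--Br\"and\'en \cite{BBWeylAlgebra} and used as a black box, with only the special case $T = 1 + p\,\partial_u + q\,\partial_v$ (Corollary~\ref{cor:Tpreserves}) verified directly by checking that $1 - pu - qv$ is real stable. There is therefore no ``paper's own proof'' to compare your proposal against.

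That said, your outline is broadly faithful to the actual Borcea--Br\"and\'en argument. The necessity direction via the test polynomial $\prod_i (z_i + w_i)^N$, the explicit computation of $T(P_N)$, and the rescaling/Hurwitz limit are exactly their strategy, and your identification of the delicate points (the tensorization lemma that a preserver in $z$ extends to a joint preserver in $(z,w)$, and the need to arrange the substitution so that the limit isolates $F_T(z,-w)$) is accurate. For sufficiency, the polarization/Grace--Walsh--Szeg\H{o} reduction to multi-affine polynomials is also standard in their framework, though the precise mechanism by which the symbol $F_T(z,-w)$ enters is slightly different from what you sketch: rather than ``restriction-to-faces'' operators, they typically use a direct duality between multiplication and differentiation on multi-affine polynomials (essentially $\partial_{z_i} \leftrightarrow$ evaluation via an Apolarity-type pairing). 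Your plan is on the right track but, as written, is an outline rather than a proof; the steps you flag as ``hard'' genuinely are, and filling them in would essentially reproduce a nontrivial portion of \cite{BBWeylAlgebra}.

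For the purposes of the present paper, none of this is needed: only Corollary~\ref{cor:Tpreserves} is used, and it has a two-line self-contained proof.
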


We will use a special case of this result.
\begin{corollary}\label{cor:Tpreserves}
For non-negative real numbers $p$ and $q$ and variables $u$ and $v$,
the operator $T = 1 + p \partial_{u} + q \partial_{v}$ preserves real stability.
\end{corollary}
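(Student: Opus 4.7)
The plan is to apply Theorem~\ref{Weyl} directly. The operator $T = 1 + p\partial_u + q\partial_v$ fits the hypothesis of that theorem, with only three nonzero coefficients: $c_{(0,0),(0,0)} = 1$, $c_{(0,0),(1,0)} = p$ (corresponding to $\partial_u$), and $c_{(0,0),(0,1)} = q$ (corresponding to $\partial_v$). Hence the associated symbol is
\[
  F_{T}(u,v,w_{u},w_{v}) \;=\; 1 + p\,w_{u} + q\,w_{v}.
\]
By Theorem~\ref{Weyl}, it suffices to check that $F_{T}(u,v,-w_{u},-w_{v}) = 1 - p\,w_{u} - q\,w_{v}$ is real stable as a polynomial in the four variables $u,v,w_{u},w_{v}$.

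The next step is to verify this real stability by hand. The polynomial $1 - p\,w_{u} - q\,w_{v}$ is affine with real coefficients, so for any point $(u,v,w_{u},w_{v})\in\mathbb{C}^{4}$ whose imaginary parts are all strictly positive, its imaginary part is $-p\,\mathrm{Im}(w_{u}) - q\,\mathrm{Im}(w_{v})$. Since $p,q \geq 0$ and $\mathrm{Im}(w_{u}),\mathrm{Im}(w_{v}) > 0$, this is strictly negative unless $p = q = 0$, in which case the polynomial is the nonzero constant $1$. In either case the polynomial cannot vanish on the open upper half-plane, so it is real stable.

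Combining these two steps, Theorem~\ref{Weyl} yields that $T$ preserves real stability, which is the desired conclusion. There is no substantive obstacle here; the entire argument is a routine specialization of the Borcea--Br\"and\'en characterization, and the only thing to be careful about is the sign flip $w \mapsto -w$ in the statement of Theorem~\ref{Weyl}, which is exactly what converts the nonnegativity of $p$ and $q$ into the required sign condition on the symbol.
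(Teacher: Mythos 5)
Your proof is correct and follows the same route as the paper: apply the Borcea--Br\"and\'en characterization (Theorem~\ref{Weyl}) and verify by the imaginary-part argument that $1 - p\,w_u - q\,w_v$ is real stable. You are slightly more explicit in spelling out the coefficient structure of $T$ and in handling the degenerate case $p = q = 0$, but the substance is identical.
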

\begin{proof}
We just need to show that the polynomial $1 - p u - q v$ is real stable.
To see this, consider $u$ and $v$ with positive imaginary parts.
The imaginary part of $1 - p u - q v$ will then be negative, and
  so cannot be zero.
\end{proof}

We now show how operators of the preceding kind can be used to generate the
  expected characteristic polynomials that appears in Theorem~\ref{thm:skewMatchingPolynomial}.
\begin{lemma}\label{lem:Toper}
For an invertible matrix $A$, vectors $a$ and $b$, and a number $p \in [0,1]$,
\begin{multline*}
Z_{u} Z_{v} (1 + p \partial_{u} + (1-p) \partial_{v})
\mydet{A + u a a^{T} + v b b^{T}} = 
p \mydet{A + a a^{T}} + (1-p) \mydet{A + b b^{T}}.
\end{multline*}
\end{lemma}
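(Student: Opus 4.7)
The plan is to exploit the rank-one structure of $aa^T$ and $bb^T$ to reduce the claim to a short algebraic identity. The key observation is that the polynomial
\[
  P(u,v) \defeq \mydet{A + u a a^{T} + v b b^{T}}
\]
is \emph{multilinear} in $(u,v)$: because each of $aa^T$ and $bb^T$ has rank one, $P$ has degree at most one in $u$ (with $v$ fixed) and at most one in $v$ (with $u$ fixed). Hence $P$ takes the form $\alpha + \beta u + \gamma v + \delta uv$ for some scalars $\alpha,\beta,\gamma,\delta$ depending on $A, a, b$. One concrete way to see this is to write $u aa^T + v bb^T = [a,b]\,\mathrm{diag}(u,v)\,[a,b]^T$ and invoke the matrix determinant lemma: using the invertibility of $A$,
\[
  P(u,v) = \mydet{A}\cdot\mydet{I_2 + \mathrm{diag}(u,v)\cdot [a,b]^T A^{-1}[a,b]},
\]
and the $2\times 2$ determinant on the right has $u$ appearing only in its first row and $v$ only in its second, so it is visibly multilinear.

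The next step is to identify the coefficients $\alpha,\beta,\gamma$ by specialization: $\alpha = P(0,0) = \mydet{A}$, $\alpha+\beta = P(1,0) = \mydet{A+aa^T}$, and $\alpha+\gamma = P(0,1) = \mydet{A+bb^T}$. The cross-term coefficient $\delta$ will not matter for the final answer.

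Finally, I would apply the operator directly. Since $\partial_u P = \beta + \delta v$ and $\partial_v P = \gamma + \delta u$, we get
\[
  (1 + p\,\partial_u + (1-p)\,\partial_v)\, P(u,v) = P(u,v) + p(\beta + \delta v) + (1-p)(\gamma + \delta u),
\]
and applying $Z_u Z_v$ (i.e., setting $u = v = 0$) kills every term involving $u$ or $v$, leaving $\alpha + p\beta + (1-p)\gamma$. Substituting the specializations above and using the cancellation $1 - p - (1-p) = 0$ on the three $\mydet{A}$ contributions produces exactly $p\,\mydet{A + aa^T} + (1-p)\,\mydet{A + bb^T}$, as required.

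There is no real obstacle; the entire content of the lemma is the multilinearity of $P$, after which the operator collapses to an affine combination of the two rank-one perturbation determinants. The invertibility hypothesis on $A$ is used only to justify the matrix determinant lemma step; since the identity is polynomial in the entries of $A$, it extends to arbitrary $A$ by continuity if desired.
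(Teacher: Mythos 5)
Your proof is correct, and it rests on the same engine as the paper's (the matrix determinant lemma), but it is organized a bit differently in a way that genuinely streamlines the argument. The paper applies the matrix determinant lemma and Jacobi's formula to show that the left-hand side equals $\det(A)\bigl(1 + p\,a^{T}A^{-1}a + (1-p)\,b^{T}A^{-1}b\bigr)$, and then applies the matrix determinant lemma a second time to reassemble the right-hand side. You instead observe up front that $P(u,v)=\det(A+uaa^T+vbb^T)$ is multilinear in $(u,v)$, write $P=\alpha+\beta u+\gamma v+\delta uv$, identify $\alpha,\beta,\gamma$ by evaluating at $(0,0),(1,0),(0,1)$, and let the affine combination $\alpha+p\beta+(1-p)\gamma$ telescope. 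This avoids invoking Jacobi's formula and the second application of the matrix determinant lemma, and makes the cancellation of the $\det A$ contributions explicit; it also makes plain that the invertibility of $A$ is inessential, which you correctly note follows by polynomial continuity. One small remark: multilinearity can be seen even more directly than via your factorization $[a,b]\,\mathrm{diag}(u,v)\,[a,b]^T$ — any $2\times 2$ minor of the rank-one matrix $aa^T$ vanishes, so no $u^2$ term can survive the Leibniz expansion of the determinant — but your derivation is perfectly valid.
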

\begin{proof}
The matrix determinant lemma (see, e.g., \cite{harville2008matrix}) states that for every nonsingular matrix $A$ and every real number $t$,
\[
  \mydet{A + t a a^{T}} = \mydet{A} (1 + t a^{T} A^{-1} a).
\]
One consequence of this is Jacobi's formula for the derivative of the determinant:
\[
  \partial_{t} \mydet{A + t a a^{T}} = \mydet{A} (a^{T} A^{-1} a).
\]
This formula implies that
\begin{multline*}
Z_{u} Z_{v} (1 + p \partial_{u} + (1-p) \partial_{v})
\mydet{A + u a a^{T} + v b b^{T}}
= \mydet{A}
\left(
1 +
p  (a^{T} A^{-1} a)
+
(1-p)  (b^{T} A^{-1} b)
 \right).
\end{multline*}
By the matrix determinant lemma, this equals
\[
  p \mydet{A + a a^{T}} + (1-p) \mydet{A + b b^{T}}.
\]
\end{proof}

Using these tools, we prove our main technical result on real-rootedness. 

\begin{theorem} \label{thm:realroots}
Let $a_{1}, \dotsc , a_{m}$ and $b_{1}, \dotsc , b_{m}$ be vectors in $\R^n$, 
and let $p_{1}, \dotsc , p_{m}$ be real numbers in $[0,1]$,
  and let $D$ be a positive semidefinite matrix.
Then every (univariate) polynomial of the form 
$$
  P(x) \defeq \sum_{S \subseteq [m]}
  \left( \prod_{i\in S}{p_{i}} \right)
  \left( \prod_{i \not \in S}{1-p_{i}} \right)
  \mydet{x I + D + \sum_{i \in S} a_{i} a_{i}^{T} + \sum_{i \not \in S} b_{i} b_{i}^{T}}
$$
is real-rooted.
\end{theorem}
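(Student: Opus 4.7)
\medskip

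\noindent\textbf{Proof proposal.} The plan is to realize $P(x)$ as the image, under a composition of real-stability-preserving operators, of a single multivariate determinantal polynomial that is manifestly real stable. Once that is done, real-rootedness of the univariate polynomial $P(x)$ follows from the fact that a univariate real stable polynomial is either identically zero or has only real roots.

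First, introduce the auxiliary variables $u_1,v_1,\ldots,u_m,v_m$ and consider
\[
 Q(x,u_1,v_1,\ldots,u_m,v_m) \;\defeq\; \mydet{xI + D + \sum_{i=1}^m u_i a_i a_i^T + \sum_{i=1}^m v_i b_i b_i^T}.
\]
Since $I$, $D$, $a_ia_i^T$, and $b_ib_i^T$ are all positive semidefinite, Lemma \ref{matrixPencils} gives the real stability of $\mydet{xI + yD + \sum u_i a_ia_i^T+\sum v_i b_ib_i^T}$ in the variables $(x,y,u_1,v_1,\ldots,u_m,v_m)$; specializing $y=1$ preserves real stability (via the standard Hurwitz argument, as noted before Theorem~\ref{Weyl}), so $Q$ is real stable in $(x,u_1,v_1,\ldots,u_m,v_m)$.

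Next, for each $i$ set
\[
 T_i \;\defeq\; 1 + p_i\,\partial_{u_i} + (1-p_i)\,\partial_{v_i},
\]
which preserves real stability by Corollary \ref{cor:Tpreserves} (with $p=p_i$, $q=1-p_i$). The operators $T_i$ commute with each other and with every $Z_{u_j}, Z_{v_j}$, and evaluating a real stable polynomial at real values of some of its variables again yields a real stable polynomial. Hence
\[
 R(x) \;\defeq\; \Bigl(\prod_{i=1}^m Z_{u_i}Z_{v_i}T_i\Bigr) Q(x,u_1,v_1,\ldots,u_m,v_m)
\]
is a real stable polynomial in $x$ alone, i.e.\ a real-rooted univariate polynomial (or identically zero).

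It remains to verify that $R(x)=P(x)$, and this is the step that carries the combinatorics but is essentially bookkeeping on top of Lemma \ref{lem:Toper}. Fix an index $i$ and freeze every variable other than $u_i,v_i$; the polynomial $Q$, viewed as a function of $u_i,v_i$, has the form $\mydet{M + u_i a_ia_i^T + v_i b_ib_i^T}$ for a matrix $M$ that is nonsingular for generic values of the frozen variables. Lemma \ref{lem:Toper} then yields
\[
 Z_{u_i}Z_{v_i}T_i\,\mydet{M + u_i a_ia_i^T + v_i b_ib_i^T} \;=\; p_i\mydet{M + a_ia_i^T} + (1-p_i)\mydet{M + b_ib_i^T},
\]
which is exactly the step that either ``selects'' $a_i a_i^T$ with weight $p_i$ or $b_ib_i^T$ with weight $1-p_i$ inside the determinant. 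Iterating this across $i=1,\ldots,m$ (and extending from generic $M$ to all $M$ by continuity of both sides in the frozen variables) produces precisely the sum defining $P(x)$. The main obstacle is thus purely notational: keeping track of the iterated application of the $T_i$'s to confirm this identity; no new analytic input beyond Lemmas \ref{matrixPencils} and \ref{lem:Toper} and Corollary \ref{cor:Tpreserves} is needed.
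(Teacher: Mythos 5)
Your proposal is correct and follows essentially the same route as the paper: introduce the multivariate determinantal polynomial $Q$, establish its real stability via Lemma~\ref{matrixPencils}, apply the stability-preserving operators $T_i = 1 + p_i\partial_{u_i} + (1-p_i)\partial_{v_i}$ followed by $Z_{u_i}Z_{v_i}$, and identify the result with $P(x)$ using Lemma~\ref{lem:Toper}. In fact you are slightly more careful than the paper in one small spot: because $D$ appears without a variable coefficient, $\mydet{xI+D+\sum u_ia_ia_i^T+\sum v_ib_ib_i^T}$ is not literally in the form covered by Lemma~\ref{matrixPencils}, and your device of introducing an auxiliary variable $y$ multiplying $D$ and then specializing $y=1$ (via the Hurwitz argument) fills this gap explicitly; the remaining difference, that you verify $R=P$ by freezing variables and invoking Lemma~\ref{lem:Toper} at generic points and extending by continuity rather than by the paper's explicit finite induction, is cosmetic.
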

\begin{proof}
Let $u_1, \dots, u_m$ and $v_1, \dots, v_m$ be formal variables and define
$$
Q(x, u_1,\ldots,u_m,v_1,\ldots,v_m) = \mydet{xI + D + \sum_{i} u_{i} a_{i} a_{i}^{T} + \sum_{i} v_{i} b_{i} b_{i}^{T}}.
$$
Lemma~\ref{matrixPencils} implies that $Q$ is real stable.

We claim that we can rewrite $P$ as 
\[
P (x) = 
\left(
  \prod_{i=1}^{m} 
Z_{u_{i}} Z_{v_{i}}
T_{i}
 \right) 
Q(x, u_1,\ldots,u_m,v_1,\ldots,v_m)
,
\]
where $T_{i} = 1 + p_{i} \partial_{u_{i}} + (1-p_{i}) \partial_{v_{i}}$.
To see this, we prove by induction on $k$ that
\[
\left(
  \prod_{i=1}^{k} Z_{u_{i}} Z_{v_{i}} T_{i}
 \right) 
Q(x, u_1,\ldots,u_m,v_1,\ldots,v_m)
\]
equals
\begin{multline*}
 \sum_{S \subseteq [k]}
  \left( \prod_{i\in S}{p_{i}} \right)
  \left( \prod_{i \in [k] \setminus S}{1-p_{i}} \right) 
  \mydet{x I + D + \sum_{i \in S} a_{i} a_{i}^{T} + \sum_{i \in [k] \setminus S} b_{i} b_{i}^{T}
  + \sum_{i > k} u_{i} a_{i} a_{i}^{T}
  + v_{i} b_{i} b_{i}^{T}
}.
\end{multline*}
The base case ($k = 0$) is trivially true, as it is the definition of $Q$.
The inductive step follows from Lemma~\ref{lem:Toper}.
The case $k = m$ is exactly the claimed identity.

Starting with $Q$ (a real stable polynomial) we can then apply Corollary~\ref{cor:Tpreserves} and the closure of real stable polynomials under the restrictions of variables to real constants to see that each of the polynomials above, including $P(x)$, is also real stable.
As $P(x)$ is real stable and has one variable, it is real-rooted. 
\end{proof}

Alternatively, one can prove Theorem~\ref{thm:realroots} 
  by observing that $P$ is a \textit{mixed characteristic polynomial}
  and then applying results of the second paper in this series~\cite{IF2}.

\begin{proof}[Proof of Theorem~\ref{thm:skewMatchingPolynomial}]
For each vertex $u$, let $d_{u}$ be its degree, and
  let $d = \max_{u} d_{u}$.
We need to prove that the polynomial
\[
  \sum_{s \in \setof{\pm 1}^{m}} 
  \left(\prod_{i : s_{i} = 1} p_{i} \right)
  \left(\prod_{i : s_{i} = -1} (1 - p_{i}) \right) 
\mydet{x I - A_{s}}
\]
is real-rooted.
This is equivalent to proving that the the following
  polynomial is real-rooted
\begin{equation}\label{eqn:thmSkewPoly}
  \sum_{s \in \setof{\pm 1}^{m}} 
  \left(\prod_{i : s_{i} = 1} p_{i} \right)
  \left(\prod_{i : s_{i} = -1} (1 - p_{i}) \right) 
\mydet{x I + d I - A_{s}},
\end{equation}
as their roots only differ by $d$.

We now observe that the matrix $d I - A_{s}$ is a signed
  Laplacian matrix of $G$ plus a nonnegative diagonal matrix.
For each edge $(u,v)$, define the rank $1$-matrices
\begin{align*}
  L^{1}_{u,v} & = (e_{u} - e_{v}) (e_{u} - e_{v})^{T}, \quad  \text{and}\\
  L^{-1}_{u,v} & = (e_{u} + e_{v}) (e_{u} + e_{v})^{T},
\end{align*}
where $e_{u}$ is the elementary unit vector in direction $u$. 
Consider a signing $s$ and let $s_{u,v}$ denote the sign it assigns to edge $(u,v)$.
Since the original graph had maximum degree $d$, we have
\[
  d I - A_{s} = \sum_{(u,v) \in E} L^{{s}_{u,v}}_{u,v} + D,
\]
where $D$ is the diagonal matrix whose $u$th diagonal entry
  equals $d - d_{u}$.
As the diagonal entries of $D$ are non-negative, it is positive semidefinite.
If we now set $a_{u,v} = (e_{u} - e_{v})$
  and $b_{u,v} = (e_{u} + e_{v})$,
  we can express 
  the polynomial in \eqref{eqn:thmSkewPoly} as
\begin{multline*}
  \sum_{s \in \setof{\pm 1}^{m}} 
  \left(\prod_{i : s_{i} = 1} p_{i} \right)
  \left(\prod_{i : s_{i} = -1} (1 - p_{i}) \right) 
\mydet{x I + D
  + \sum_{s_{u,v} = 1} a_{u,v} a_{u,v}^{T} 
  + \sum_{s_{u,v} = -1} b_{u,v} b_{u,v}^{T}  
}.
\end{multline*}
The fact that this polynomial is real-rooted now follows from Theorem
  \ref{thm:realroots}.
\end{proof}

\section{Conclusion}

We conclude by drawing an analogy between our proof technique and the
probabilistic method, which relies on the fact
that for every random variable $X:\Omega \rightarrow \R$, there is 
an $\omega\in\Omega$ for which $X(\omega)\le \expec{}{X}$. 
We have shown that for certain special polynomial-valued random variables $P:\Omega\rightarrow \R[x]$, there must be an $\omega$ with $\lambda_{max}(P(\omega))\le
\lambda_{max}(\expec{}{P})$. 
In fact it is possible to define interlacing families in greater generality than we have done here, using probabilistic notation. 
In particular, we call a polynomial-valued random
variable $P$ {\em useful} if $P$ is deterministic and real-rooted {\em or} if there exist disjoint
non-trivial events $E_1,\ldots, E_k$ with $\sum_{i\le k} \prob{}{E_i}=1$ such
that the polynomials $\{\expec{}{P|E_i}\}_{i\le k}$ have a common interlacing and each
polynomial $\expec{}{P|E_i}$ is itself useful. 
The conclusion of Theorem~\ref{thm:interlacing} continues to hold for this definition, and we suspect it will be useful in non-product settings. 
In the case of this paper, the
events $E_i$ are particularly simple: they correspond to setting one sign of a
lift to be $+1$ or $-1$, and the resulting sequence of polynomials
$f_\emptyset,f_{s_1},\ldots,f_{s_1,\ldots,s_m}$ forms a martingale (a fact that we do not use, but may be interesting in its own right).

Like many applications of the probabilistic method, our proof does not yield a
polynomial-time algorithm. 
In the particular case of random lifts, the
polynomial $f_\emptyset$ is itself a matching polynomial, which is $\# P$-hard
to compute in general. 
It would certainly be interesting to find computationally efficient analogues of our method.

\section*{Acknowledgment}
This research was partially supported by NSF grants CCF-0915487 and  CCF-1111257,
  an NSF Mathematical Sciences Postdoctoral Research Fellowship, Grant No. DMS-0902962,
  a Simons Investigator Award, and a MacArthur Fellowship.

We thank James Lee for suggesting Lemma~\ref{lem:Toper} and the simpler proof of Theorem \ref{thm:realroots} that appears here.
We thank Mirk{\'o} Visontai for bringing references \cite{GodsilGutman}, \cite{Fell}, 
  \cite{ChudnovskySeymour}, and \cite{Dedieu} to our attention.

% conference papers do not normally have an appendix

% use section* for acknowledgement

% trigger a \newpage just before the given reference
% number - used to balance the columns on the last page
% adjust value as needed - may need to be readjusted if
% the document is modified later
%\IEEEtriggeratref{14}
%\IEEEtriggeratref{28}
% The "triggered" command can be changed if desired:
%\IEEEtriggercmd{\enlargethispage{-5in}}

% references section

% can use a bibliography generated by BibTeX as a .bbl file
% BibTeX documentation can be easily obtained at:
% http://www.ctan.org/tex-archive/biblio/bibtex/contrib/doc/
% The IEEEtran BibTeX style support page is at:
% http://www.michaelshell.org/tex/ieeetran/bibtex/
\bibliographystyle{abbrv}
% argument is your BibTeX string definitions and bibliography database(s)
\bibliography{lifts}
%
% <OR> manually copy in the resultant .bbl file
% set second argument of \begin to the number of references
% (used to reserve space for the reference number labels box)

\appendix

\section{Proof of Theorem~\ref{thm:matching}}\label{sec:ggtheorem}

%\begin{proof}[Proof of Theorem~\ref{thm:matching}]
Let $\sym{S}$ denote the set of permutations of a set $S$ and let $|\pi|$ denote the number of inversions of a permutation $\pi$.
Expanding the determinant as a sum over permutations $\sigma \in \sym{[n]}$, we have
\begin{align*}
\lefteqn{\expec{s}{\det(xI-A_s)}} \\
&=\expec{s}{\sum_{\sigma \in \sym{[n]}} (-1)^{|\sigma|} \prod_{i=1}^n(xI-A_s)_{i, \sigma(i)}}
\\&=\sum_{k=0}^n x^{n-k}\sum_{S\subset [n],|S|=k}\sum_{\pi \in
\sym{S}}\expec{s}{(-1)^{|\pi|} \prod_{i\in S} (A_s)_{i, \pi(i)}}
\\&\qquad\textrm{where $\pi$ denotes the part of $\sigma$ with $\sigma(i)\neq i$}
\\&=\sum_{k=0}^n x^{n-k}\sum_{S\subset
[n],|S|=k}\sum_{\pi\in\sym{S}}\expec{s}{(-1)^{|\pi|} \prod_{i\in S} s_{i, \pi(i)}}.
\end{align*}
Since the $s_{ij}$ are independent with $\expec{}{s_{ij}}=0$, only those products which contain even powers ($0$ or $2$) 
of the $s_{ij}$ survive. Thus, we may restrict our attention to the permutations $\pi$ which contain only orbits of size two.
These are just the perfect matchings on $S$.
There are no perfect matchings when $\sizeof{S}$ is odd; 
otherwise, each matching consists of 
$|S|/2$ inversions.
Since $\expec{s}{s_{ij}^2} = 1$, we are left with

\begin{align*}
\lefteqn{\expec{s}{\det(xI-A_s)}} \\
& =\sum_{k=0}^n
x^{n-k}\sum_{|S|=k}\sum_{\textrm{matching $\pi$ on $S$}} (-1)^{|S|/2}\cdot 1 
\\
&= \mu_G(x),
\end{align*}
as desired.
%\end{proof}

% that's all folks
\end{document}